\newtheorem{Proposition}{Proposition}[section]
\newtheorem{Lemme}{Lemma}[section]
\newtheorem{Theoreme}{Theorem}[section]
\newcommand{\sign}{\text{sign}}
\def \mE{\mathcal{E}}
\def \R{\mathbb{R}}
\def \N{\mathbb{N}}
\def \ds{\displaystyle}
\renewcommand{\phi}{\varphi}
\newcommand\restr[2]{{% we make the whole thing an ordinary symbol
  \left.\kern-\nulldelimiterspace % automatically resize the bar with \right
  #1 % the function
  \vphantom{\big|} % pretend it's a little taller at normal size
  \right|_{#2} % this is the delimiter
}}
\begin{document}

\title[]{Finite time blow-up for a  nonlinear parabolic equation with
%quadratic gradient term and 
smooth coefficients}  
\author{Oscar Jarr\'in}
\address{Escuela de Ciencias Físicas y Matemáticas, Universidad de Las Américas, V\'ia a Nay\'on, C.P.170124, Quito, Ecuador}
\email{(corresponding author) oscar.jarrin@udla.edu.ec}
\author{Gast\'on Vergara-Hermosilla}
\address{Institute for Theoretical Sciences, Westlake University, Hangzhou,  People's Republic of China}
\email{ gaston.v-h@outlook.com}
\date{\today}

\subjclass[2020]{Primary: 35B44; Secondary: 35A24, 35B30}
% Please provide a minimum of 5 keywords or phrases.
\keywords{Parabolic equations; Nonlinear gradient terms; Local well-posedness; Blow-up of solutions;  Virial-type estimates}

\maketitle

\begin{abstract} In this article, we consider an $n$-dimensional parabolic partial differential equation with a smooth coefficient term in the nonlinear gradient term. This equation was first introduced and analyzed in [E. Issoglio, On a non-linear transport-diffusion equation with distributional coefficients, Journal of Differential Equations, Volume 267, Issue 10 (2019)], where one of the main \emph{open questions} is the possible \emph{finite-time blow-up} of solutions. Here, leveraging a \emph{virial-type estimate}, we provide a \emph{positive answer} to this question within the framework of smooth solutions.  
\end{abstract}

\section{Introduction and main result}
This article investigates the finite-time blow-up of smooth solutions to the following nonlinear parabolic equation:
\begin{equation}\label{EquationIntro0}
\begin{cases}
\partial_tu=\Delta u + | \nabla u|^2 \mathfrak{b},\\[3pt]
u(0,x)=u_0(x),
\end{cases}
\end{equation}
where, given a time $T>0$ and $n\geq 1$, $u:[0,T]\times \R^n \to \R$ denotes the unknown solution, $\mathfrak{b}:[0,T]\times \R^n \to \R$ is a prescribed coefficient term, and $u_0: \R^n \to \R$ stands for the initial datum. 

\medskip

The primary motivation for studying this class of models stems from different physical and probabilistic contexts. In the latter, specific choices of singular coefficients $\mathfrak{b}(t,x)$ are particularly relevant when modeling the realization of random noise.   We refer to the interested reader to \cite{Hinz,Hinz2,Issoglio13} for further details.

\medskip

In the deterministic case, the equation (\ref{EquationIntro0}) was first introduced and studied in \cite{Issoglio19}. 
In order to describe some of the main results in such paper, it is convenient to recall that, for any parameter $\gamma \in \R$, the H\"older-Zygmund  spaces $\mathcal{C}^\gamma(\R^n)$ can be characterized through the general framework of Besov spaces yielding the equivalence $\mathcal{C}^\gamma(\R^n) \approx  B^{\gamma}_{\infty, \infty}(\R^n)$ (see for instance \cite[Chapter 2]{Triebel}). 
Within this framework, the main contribution of \cite{Issoglio19} is to establish the existence and uniqueness  of  \emph{local-in-time} solutions $u\in \mathcal{C}_t\, \mathcal{C}^{\alpha+1}_x$, which arise   from initial data in H\"older-Zygmund  spaces of positive order $\mathcal{C}^{\alpha+1}(\R^n)$, for appropriate values of $\alpha>0$, and coefficient terms in H\"older-Zygmund  spaces of negative order $\mathcal{C}^{\beta}(\R^n)$, for suitably chosen $\beta<0$.  In particular, as mentioned above, this  singular coefficient terms are motivated by a  probabilistic interpretation. 

\medskip

More precisely, such solutions are  constructed by using the following (formally equivalent) mild formulation: 
\begin{equation*}
    u(t,\cdot)=h_t\ast u_0 + \int_{0}^{t}h_{t-\tau}\ast \big( |\nabla u(\tau,\cdot)|^2\, \mathfrak{b}(\tau,\cdot)\big) \, d\tau,
\end{equation*}
where $h_t$ denotes the well-known heat kernel. In this formulation, we highlight  that the  nonlinear expression $|\nabla u(t,x)|^2$, together with the coefficient term $\mathfrak{b}(t,x)$, make the nonlinear term delicate to handle. To overcome this 
issue,  
%problematic, 
some  suitably constraints on the parameters $\beta <0<\alpha$ are required. Specifically, the technical relationship  $-1/2<\beta<0<\alpha<1$, together with the assumptions that $u_0\in \mathcal{C}^{\alpha+1}_x$  and $\mathfrak{b}\in \mathcal{C}_t \mathcal{C}^{\beta}_x$, allowed the author of \cite{Issoglio19} to prove a  suitably   control on the nonlinear term above.  Consequently, using a  contraction principle, the solution $u \in \mathcal{C}_t \, \mathcal{C}^{\alpha+1}_x$ to the problem above is rigorously obtained  for a time $T_0>0$, which is  sufficiently small respect with the quantity $\|u_0\|_{\mathcal{C}^{\alpha+1}}$ and the parameters $\alpha, \beta$. For more details, we refer to \cite[Theorem $3.7$]{Issoglio19}.

\medskip

Due to the restrictions on the existence time $T_0$, which are essentially  imposed by the nonlinear term $|\nabla u|^2$, one  of the main \emph{open questions} in \cite{Issoglio19} concerns the global-in-time or blow-up in finite time of solutions to the equation (\ref{EquationIntro0}). In this context, in \cite[Proposition $3.13$]{Issoglio19}, it is proven a first blow-up criterion, essentially stating that if blow-up occurs then it must holds in a finite time $t_*>0$. 

\medskip

Thus, with the aim of studying the possible blow-up of solutions,  in \cite{Chamorro} the authors analyzed the following toy model for equation (\ref{EquationIntro0}):
\begin{equation}\label{EquationIntro}
\begin{cases}
\partial_t u=-(-\Delta)^{\frac{\alpha}{2}} u +|(-\Delta)^{\frac{1}{2}} u |^2\ast \mathfrak{b}, \qquad  0<\alpha \leq 2,\\[3pt]
\mathfrak{u}(0,x)=\mathfrak{u}_0(x). 
\end{cases}
\end{equation}
Here, the classical Laplacian operator is substituted by its fractional power, which is defined at the Fourier level by the symbol $|\xi|^\alpha$. 
However, the main featured of this model is given by the \emph{modified nonlinear term}, where the  expression $|\nabla u|^2$ becomes the nonlocal expression with the same derivative order $|(-\Delta)^{\frac{1}{2}} u|^2$; and the product with the coefficient term $\mathfrak{b}(t,x)$ becomes a product of convolution.

\medskip

This somehow strong modification on the nonlinear term, allow the authors of \cite{Chamorro} to apply a  Fourier-based method to prove the blow-up in finite time of  solutions. First, for any initial datum $u_0 \in H^1(\R^n)$,  in \cite[Theorem $1$]{Chamorro} the authors construct local-in-time  mild solutions $u\in L^\infty([0,T_0], H^1(\R^n))$ to the equation (\ref{EquationIntro}). These solutions are obtained from an interesting relationship between the fractional power $\alpha$ and the regularity in spatial variable assumed on the coefficient $\mathfrak{b}(t,x)$. 
More precisely, the  weak smoothing effects of the fractional Laplacian operator  in the case  $0<\alpha<1$ are compensated by stronger regular coefficients $\mathfrak{b} \in L^\infty_t H^\gamma_x$ with $0\leq 1-\alpha \leq \gamma<1$. In contrast, stronger smoothing effects in the case when $1<\alpha \leq 2$ allow to consider singular coefficients  $\mathfrak{b} \in L^\infty_t H^{-\gamma}_x$, with $0\leq \gamma < \alpha -1$.
%
%\medskip
%
Having constructed mild solutions $u\in L^\infty_tH^1_x$,   the main result of  \cite{Chamorro}  states that, in the Fourier variable, one can define  well-prepared initial data $\widehat{u_0}(\xi)$ and coefficient terms $\widehat{\mathfrak{b}}(t,\xi)$ to obtain the blow-up of the associated solution $u(t,\cdot)$ at a finite time $t_{*}>0$, which  explicitly depends on the fractional power $\alpha$ by the expression   $t_{*}=\frac{\ln(2)}{2^\alpha}$. 

\medskip

This approach is mainly inspired by the method of  Montgomery-Smith  \cite{Montgomery-Smith}, which was firstly considered to study the blow-up of the well-known cheap Navier-Stokes equations. See also \cite[Section $11.2$]{Lemarie-Rieusset} and \cite{Cheskidov,Gallagher-Paicu} for more details and related results. In this method, since the operator $(-\Delta)^{\frac{1}{2}}$ has the positive symbol $|\xi|$ in the Fourier level, along with the well-known fact that the convolution product with $\mathfrak{b}(t,x)$ becomes a pointwise product with $\widehat{\mathfrak{b}}(t,\xi)$, the authors of \cite{Chamorro} are able to derive suitably lower bounds for the mild formulation of the solution    in the Fourier variable:
\begin{equation*}
  \widehat{u}(t,\xi)=e^{-|\xi|^\alpha\, t}\,\widehat{u_0}(\xi)+\int_{0}^{t}e^{-|\xi|^\alpha (t-\tau)} \Big(\big(|\xi|\widehat{u}\ast |\xi|\widehat{u}\big)\, \widehat{\mathfrak{b}}\Big)(\tau,\xi)\,d\tau. 
\end{equation*}
These lower bounds, involving well-chosen weight functions defined in the separate variables $\xi\in \R^n$ and $t>0$, yield the finite blow-up of the expression $\|u (t,\cdot)\|_{H^1}$, when $t\to t_{*}$.  

\medskip

However, the method of Montgomery-Smith seems to break down for the original equation (\ref{EquationIntro0}). Essentially, in contrast to the fractional Laplacian operator $(-\Delta)^{\frac{1}{2}}$, now  each component   of the gradient operator $\nabla$ in the nonlinear term has the imaginary symbol ${{\bf i}\xi_j}$. Consequently, when deriving lower bound on the expression $\widehat{u}(t,\xi)$, one loses the sign information  in the nonlinear term, which is one  of the key elements in this method.  Consequently, the previous results \cite{Chamorro,Issoglio19} left open the possible blow-up of solution to the equation  (\ref{EquationIntro0}). 

\medskip

In this context, this article gives a positive answer to this problematic of equation (\ref{EquationIntro0}). Our main contribution is to show that, in this equation,  one can define suitable chosen initial data $u_0(x)$ and coefficient terms $\mathfrak{b}(t,x)$ which, together with the nonlinear effects of the term $|\nabla u|^2$, produces a blow-up in finite time in the solution. 

\medskip

To achieve such result, we apply  a different approach   mainly inspired by \cite{Cordoba}, in the setting of a nonlocal transport equation, and \cite{Dong-Du-Li}, in the setting of the fractional  Burgers equation. As we will elaborate in more detail in the following section, this method fundamentally relies on  the differential formulation of equation (\ref{EquationIntro0}), where, for a  well-chosen weight function, we are able to derive a \emph{virial-type estimate} on the solution yielding its blow-up in finite time.

\subsection*{The main result} To implement the method mentioned above, we will work with smooth solutions of equation (\ref{EquationIntro0}). Specifically, we will require certain local boundedness and continuity properties of their derivatives with respect to the spatial variable. For the completeness of this article, we will  start by constructing these solutions, which arise from initial data  and coefficient terms in Sobolev spaces. 
\begin{Proposition}\label{Main-Proposition}
For $s>n/2 +3$, let $u_0 \in H^s(\R^n)$ be an initial datum. Additionally, assume that the coefficient $\mathfrak{b}(t,x)$ satisfies
\begin{equation}\label{Assumption-coeffiecient-Sobolev}
   \mathfrak{b}\in \mathcal{C}\cap L^\infty\big( [0,+\infty[, H^{s-1}(\R^n)\big). 
\end{equation}
Then, there exists a time $T_0>0$, which depends on $u_0$ and $\mathfrak{b}$, and there exists a function 
\[ u \in \mathcal{C}\big([0,T_0], H^s(\R^n)\big), \]
which is the unique solution to the equation (\ref{EquationIntro0}).
\end{Proposition}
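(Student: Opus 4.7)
The plan is to proceed by a classical Banach fixed-point argument on the mild formulation
\[
\Phi(u)(t,\cdot) = h_t \ast u_0 + \int_0^t h_{t-\tau} \ast \bigl( |\nabla u(\tau,\cdot)|^2\, \mathfrak{b}(\tau,\cdot) \bigr)\, d\tau,
\]
in the complete metric space
\[
E_{T_0,R} = \Bigl\{ u \in \mathcal{C}\bigl([0,T_0], H^s(\R^n)\bigr) : \sup_{t\in[0,T_0]} \|u(t,\cdot)\|_{H^s} \leq R \Bigr\},
\]
with $R := 2\|u_0\|_{H^s}$ and $T_0>0$ to be chosen sufficiently small in terms of $R$ and $\|\mathfrak{b}\|_{L^\infty_t H^{s-1}_x}$.

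The first ingredient is a standard heat-semigroup smoothing estimate: working at the Fourier level with the symbol $e^{-t|\xi|^2}$ and using that $(1+|\xi|^2)^{1/2} e^{-t|\xi|^2}$ is bounded by $C(1+t^{-1/2})$, one obtains, for every $g\in H^{s-1}$ and $0<t\leq 1$,
\[
\|h_t \ast g\|_{H^s} \leq C\, t^{-1/2}\, \|g\|_{H^{s-1}}.
\]
The second ingredient is a nonlinear tame estimate: since $s-1>n/2$, the Sobolev space $H^{s-1}(\R^n)$ is a Banach algebra, and together with the embedding $\|\nabla u\|_{H^{s-1}} \leq \|u\|_{H^s}$ this yields
\[
\bigl\| |\nabla u|^2\, \mathfrak{b} \bigr\|_{H^{s-1}} \leq C\, \|\nabla u\|_{H^{s-1}}^2\, \|\mathfrak{b}\|_{H^{s-1}} \leq C\, \|u\|_{H^s}^2\, \|\mathfrak{b}\|_{H^{s-1}}.
\]
Combining these two bounds inside the Duhamel integral and performing the $\tau$-integration of $(t-\tau)^{-1/2}$, one gets for every $u\in E_{T_0,R}$ (with $T_0\leq 1$)
\[
\sup_{t\in[0,T_0]} \|\Phi(u)(t,\cdot)\|_{H^s} \leq \|u_0\|_{H^s} + C\, T_0^{1/2}\, R^2\, \|\mathfrak{b}\|_{L^\infty_t H^{s-1}_x}.
\]
Choosing $T_0>0$ small enough so that the second term is $\leq \tfrac{R}{2}$ makes $\Phi$ stabilize $E_{T_0,R}$. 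An analogous computation for $\Phi(u)-\Phi(v)$ — using the factorization $|\nabla u|^2 - |\nabla v|^2 = (\nabla u + \nabla v)\cdot(\nabla u - \nabla v)$ and again the algebra property of $H^{s-1}$ — yields
\[
\sup_{t\in[0,T_0]} \|\Phi(u)-\Phi(v)\|_{H^s} \leq C\, T_0^{1/2}\, R\, \|\mathfrak{b}\|_{L^\infty_t H^{s-1}_x}\, \sup_{t\in[0,T_0]} \|u-v\|_{H^s},
\]
so that, possibly reducing $T_0$ further, $\Phi$ is a contraction, producing a unique fixed point $u \in E_{T_0,R}$. Uniqueness in the full class $\mathcal{C}([0,T_0],H^s)$ follows by the same difference estimate combined with a Gronwall-type argument after localizing to a smaller time interval on which both hypothetical solutions are bounded.

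The remaining point is the time-continuity at $t=0$ and along $[0,T_0]$. Strong continuity of $t\mapsto h_t\ast u_0$ in $H^s$ follows from $u_0\in H^s$ and dominated convergence in the Fourier side. For the Duhamel integral, the smoothing bound combined with $|\nabla u|^2\mathfrak{b}\in L^\infty_t H^{s-1}_x$ makes it Hölder-$\tfrac{1}{2}$ continuous in $t$ with values in $H^s$; the hypothesis $\mathfrak{b}\in\mathcal{C}([0,\infty),H^{s-1})$ is used here only to ensure that the integrand $\tau\mapsto(|\nabla u|^2\mathfrak{b})(\tau)$ is strongly measurable with values in $H^{s-1}$. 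The main technical obstacle is simply bookkeeping the product and smoothing estimates to ensure the exponent $s-1>n/2$ is indeed enough for the Banach-algebra structure to close the nonlinear term; the margin $s>n/2+3$ is comfortable and will later provide the $\mathcal{C}^2_x$ regularity needed for the virial-type analysis.
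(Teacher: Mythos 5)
Your proposal is correct and follows essentially the same route as the paper: the mild (Duhamel) formulation, the heat-semigroup smoothing estimate gaining one derivative at the cost of $(t-\tau)^{-1/2}$, the Banach-algebra property of $H^{s-1}(\R^n)$ (valid since $s-1>n/2$) to bound $\||\nabla u|^2\,\mathfrak{b}\|_{H^{s-1}}\leq C\|u\|_{H^s}^2\|\mathfrak{b}\|_{H^{s-1}}$, and a contraction argument for small $T_0$. The only cosmetic difference is that the paper invokes an abstract bilinear Banach--Picard principle while you run the fixed point on a ball by hand (and additionally sketch uniqueness in the full class $\mathcal{C}([0,T_0],H^s)$ and the time-continuity, which the paper leaves implicit).
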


By the assumption $s>n/2+3$,  the  Sobolev embedding $H^s(\R^n)\subset \mathcal{C}^{3}(\R^n)$ holds. Then,  it follows that   $u_0 \in \mathcal{C}^3(\R^n)$, $\mathfrak{b}\in \mathcal{C}^{1}([0,+\infty[, \mathcal{C}^2(\R^n)$ and $u\in \mathcal{C}([0,T_0],\mathcal{C}^3(\R^n))$. Particularly, the solution $u(t,x)$ verifies the the equation (\ref{EquationIntro0}) in the classical sense. It is worth mentioning that these solutions could also be obtained using the more general framework of Besov spaces. Additionally, we do not study the optimality of the relationship $s>n/2+3$ for obtaining smooth solutions. As explained, our main objective is to focus directly on the blow-up issue.

\medskip

Now, with this information at hand, we briefly describe  the  strategy to prove the finite blow-up of smooth solutions $u(t,x)$ to  equation (\ref{EquationIntro0}). In this equation,   the quadratic term $|\nabla u|^2$ suggests to introduce the vector field $\mathrm{v}(t,x):= \nabla u(t,x)$, which  is  a solution of the following nonlinear  system:
\begin{equation}\label{Main-System-n}
\begin{cases} 
\partial_t \mathrm{v}= \Delta \mathrm{v}+\nabla \big(|\mathrm{v}|^2\, \mathfrak{b} \big), \\
\mathrm{v}(0,\cdot)= \nabla u_0. 
\end{cases}
\end{equation}
Since $u\in \mathcal{C}_t \, H^s_x$ with $s>n/2+3$, it follows that $\mathrm{v}\in \mathcal{C}_t \, H^{s-1}_x \subset \mathcal{C}_t\, \mathcal{C}^2_x$. Consequently, the vector field $\mathrm{v}(t,x)$ also solves this system in the classical sense. 

\medskip

Using the information $\mathrm{v}\in  \mathcal{C}_t\, \mathcal{C}^2_x$, the key idea in our method is to first show that the solution $\mathrm{v}(t,x)$ blows-up in a finite time $t_{*}>0$, which in turn yields the blow-up of the quantity $\| u(t,\cdot)\|_{H^s}$ as $t\to t_{*}$. 

\medskip

The blow-up phenomenon of the solution $\mathrm{v}(t,x)$  arises from the nonlinear effects of the term $\nabla \big(|\mathrm{v}|^2\, \mathfrak{b} \big)$, along with  certain  suitable  conditions on the coefficient $\mathfrak{b}(t,x)$. Specifically, for simplicity, we will assume that this coefficient depends only on the spatial variable. Additionally,  for  any $x=(x_1,\cdots, x_i, \cdots x_n)\in \R^n$, we assume that 
\begin{equation}\label{Condition-b}
\mathfrak{b}(x)=\prod_{i=1}^{n} \mathfrak{b}_{i}(x_i),\\
\end{equation}
where each term term $\mathfrak{b}_{i}(x_i)$ is a  \emph{non-constant}  function satisfying: 
\begin{equation}\label{Condition-bi}
 \mathfrak{b}_i \in  \mathcal{S}(\R), \qquad \mathfrak{b}_i(x_i)\geq 0 \quad \text{for any $x_i \in \R$}, \qquad  \mathfrak{b}_i(0)=0.
\end{equation}
Note that from the expression (\ref{Condition-b}) it holds that  $\mathfrak{b}\in \mathcal{C}\cap L^\infty([0,+\infty[, \mathcal{S}(\R^n))$. Consequently, this coefficient verifies the assumption (\ref{Assumption-coeffiecient-Sobolev}).

\medskip

In expressions (\ref{Vector-W})–(\ref{Component-w}) below, we define a suitable vector field $\mathrm{w}(x)$ localized over the compact  domain $[-1,1]^n$, which is  essentially chosen to  illustrate our main computations. Then, given the solution $\mathrm{v}(t,x)$ of the system (\ref{Main-System-n}), the virial-type estimates that we shall derive  allow us to study the time evolution of the functional
\[ I(t):=\int_{[-1,1]^n} \mathrm{v}(t,x)\cdot \big(\mathfrak{b}(x)\, \mathrm{w}(x)\big)dx. \]

\medskip

The assumption (\ref{Condition-b}) on the coefficient $\mathfrak{b}(x)$ yields that this functional satisfies an ordinary differential inequality with a quadratic nonlinearity. From this, the expression $I(t)$  blows up in finite time as long as the initial data $u_0(x)$ satisfies the conditions:
\begin{equation}\label{Condition-u0-1}
 \int_{[-1,1]^n} \nabla u_0(x) \cdot \big(\mathfrak{b}(x)\mathrm{w}(x)\big)  \, dx >0,  \end{equation}
 and
\begin{equation}\label{Condition-u0-2}
\left( \int_{[-1,1]^n} \nabla u_0(x) \cdot \big(\mathfrak{b}(x)\mathrm{w}(x)\big)  \, dx \right)^2 \geq \mathfrak{C}\times \begin{cases}\vspace{2mm}\|u_0\|_{H^s}, \quad \text{when}\, \,\, \|u_0\|_{H^s}\leq 1, \\
\|u_0\|^2_{H^s}, \quad \text{when}\,\,\, \|u_0\|_{H^s}>1,
\end{cases}
\end{equation}
for some constant $\mathfrak{C}>0$ essentially  depending on $n$, the (technical) parameter $\kappa$  introduced in expression (\ref{Component-w}) below, and the quantities $\| \mathfrak{b}\|_{L^\infty}$ and $\| \nabla \mathfrak{b}\|_{L^\infty}$.  In this context, our main result reads as follows: 

\begin{Theoreme}\label{Main-Th} Under the same assumptions as in Proposition \ref{Main-Proposition}, suppose that the coefficient $\mathfrak{b}(t,x)$ satisfies conditions (\ref{Condition-b})-(\ref{Condition-bi}), and the initial datum $u_0(x)$ adheres to conditions (\ref{Condition-u0-1})-(\ref{Condition-u0-2}).

\medskip

Then, for any given number $m_{*}\gg 1$ there exists a  finite time $t_{*}=t_{*}(m_{*})>0$, depending on $m_{*}$ and explicitly provided  in the expression (\ref{Time-blow-up}) below, such that  the quantity $\| u(t,\cdot)\|_{H^s}$ blows-up at $t_{*}$.
\end{Theoreme}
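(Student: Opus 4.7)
The plan is to introduce the vector field $\mathrm{v}=\nabla u$, which by Proposition~\ref{Main-Proposition} solves the system (\ref{Main-System-n}) classically with $\mathrm{v}\in\mathcal C_t\mathcal C^{2}_x$, and to track the localized first moment
\[
I(t):=\int_{[-1,1]^{n}}\mathrm{v}(t,x)\cdot\bigl(\mathfrak{b}(x)\,\mathrm{w}(x)\bigr)\,dx.
\]
The goal is to derive a Riccati-type differential inequality
\[
I'(t)\;\ge\;c\,I(t)^{2}\;-\;C\,\Psi\!\bigl(\|u(t,\cdot)\|_{H^{s}}\bigr),
\]
with constants $c,C>0$ and $\Psi(r)=r$ for $r\le 1$, $\Psi(r)=r^{2}$ for $r>1$. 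The two-case shape of $\Psi$ matches exactly the two cases in (\ref{Condition-u0-2}), so that, for $\mathfrak{C}$ chosen as a sufficient multiple of $C/c$, the initial datum furnishes the dominance $c\,I(0)^{2}\ge 2C\,\Psi(\|u_{0}\|_{H^{s}})$; a continuity argument will then propagate this dominance and produce blow-up.

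To obtain the inequality I would differentiate $I(t)$ along (\ref{Main-System-n}). Because $\mathrm{w}$ is supported in the cube, all boundary contributions vanish, and integration by parts on the Laplacian term (once or twice as convenient) and on the divergence of the nonlinear flux gives
\[
I'(t)\;=\;\int_{[-1,1]^{n}}\mathrm{v}\cdot\Delta(\mathfrak{b}\mathrm{w})\,dx\;-\;\int_{[-1,1]^{n}}|\mathrm{v}|^{2}\,\mathfrak{b}\,\mathrm{div}(\mathfrak{b}\mathrm{w})\,dx.
\]
The first (linear) integral is controlled by $\|\mathrm{v}\|_{L^{2}([-1,1]^{n})}\,\|\Delta(\mathfrak{b}\mathrm{w})\|_{L^{2}}$, which, using Cauchy--Schwarz together with the Sobolev embedding $H^{s-1}\subset L^{\infty}$ and the $\mathcal S$-regularity of $\mathfrak{b}$ granted by (\ref{Condition-bi}), is bounded by a constant multiple of $\|u(t,\cdot)\|_{H^{s}}$; when $\|u\|_{H^s}>1$ the quadratic product structure of the nonlinear term produces the $\|u\|_{H^{s}}^{2}$ factor in $\Psi$.

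For the nonlinear integral, the vector field $\mathrm{w}$ should be engineered so that $-\mathfrak{b}\,\mathrm{div}(\mathfrak{b}\mathrm{w})$ is nonnegative on $[-1,1]^{n}$ and pointwise bounded below by an explicit weight $\omega(x)\ge0$. The quadratic lower bound then follows from a weighted Cauchy--Schwarz inequality:
\[
I(t)^{2}\;\le\;\Bigl(\int|\mathrm{v}|^{2}\,\omega\,dx\Bigr)\Bigl(\int\frac{|\mathfrak{b}\mathrm{w}|^{2}}{\omega}\,dx\Bigr)\;=:\;M\,\int|\mathrm{v}|^{2}\,\omega\,dx,
\]
so the nonlinear term is bounded below by $M^{-1}I(t)^{2}$. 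The delicate technical point is the finiteness of $M$: the product structure (\ref{Condition-b}) and the vanishing conditions (\ref{Condition-bi}) force $\mathfrak{b}$ (and hence $\omega$) to degenerate on the coordinate hyperplanes, so the parameter $\kappa$ appearing in $\mathrm{w}$ must be calibrated so that $\mathrm{w}$ vanishes fast enough near these hyperplanes and near the boundary of $[-1,1]^{n}$ for $|\mathfrak{b}\mathrm{w}|^{2}/\omega$ to remain integrable.

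Combining the two estimates yields the announced Riccati inequality, and the proof closes by a standard bootstrap: (\ref{Condition-u0-1}) gives $I(0)>0$, while (\ref{Condition-u0-2}) together with continuity in $t$ ensures that the quadratic term dominates the diffusive error on a maximal interval $[0,T^{\ast})$, so $I'(t)\ge(c/2)I(t)^{2}$ on that interval and $I(t)$ blows up by time $2/(c\,I(0))$. Since the crude bound $|I(t)|\le C'\|u(t,\cdot)\|_{H^{s}}$ is automatic, boundedness of the $H^{s}$-norm up to $T^{\ast}$ would contradict the Riccati blow-up; hence $\|u(t,\cdot)\|_{H^{s}}$ itself must blow up. The quantitative statement in terms of $m_{*}$ follows by stopping the bootstrap at the first instant when $\|u(t,\cdot)\|_{H^{s}}$ reaches $m_{*}$, which pins down $t_{*}(m_{*})$ explicitly. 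The main obstacle will be the construction of $\mathrm{w}$: it has to make $\omega$ strictly positive on a set large enough that the lower-bound condition (\ref{Condition-u0-2}) is realizable by nontrivial smooth data, while simultaneously decaying fast enough at the degeneracies of $\mathfrak{b}$ to keep $M$ finite.
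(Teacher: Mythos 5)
Your skeleton --- the functional $I(t)$, a Riccati-type inequality obtained from a sign condition on the weight plus a weighted Cauchy--Schwarz with finite mass $M$, and error terms measured through $\|u(t,\cdot)\|_{H^s}$ with the two-case function $\Psi$ matching (\ref{Condition-u0-2}) --- is indeed the paper's strategy. But two of your structural requirements on $\mathrm{w}$ point in the wrong direction. Any \emph{continuous} weight $w_i$ supported in $[-1,1]$ with $-w_i'\geq c\,w_i^2\geq 0$ is nonincreasing and vanishes at both endpoints, hence is identically zero; so the weight producing the quadratic term cannot be smooth, and the paper's choice (\ref{Component-w}), $w_i(x_i)=\sign(x_i)(|x_i|^{-\kappa}-1)$, is odd and \emph{singular} at the coordinate hyperplanes --- it blows up there, rather than ``vanishing fast enough near these hyperplanes'' as you propose to calibrate. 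The vanishing hypothesis $\mathfrak{b}_i(0)=0$, $\mathfrak{b}_i\geq 0$ in (\ref{Condition-bi}) is precisely what kills the boundary contributions at $x_i=0$ when one integrates by parts across the singularity (the paper splits $\int_{-1}^{1}=\int_{-1}^{0}+\int_{0}^{1}$): the degeneracy of $\mathfrak{b}$ is the enabling mechanism, not an obstacle to the finiteness of $M$; indeed the paper takes $\omega=\tfrac{\kappa}{2}|\mathfrak{b}\mathrm{w}|^2$ via the pointwise bound $-w_i'\geq \tfrac12 w_i^2$, so $M=2^{n+1}/\kappa$ by Jensen. For the same reason your treatment of the linear term fails: $w_i''\sim |x_i|^{-\kappa-2}$ near $0$ and $w_i'$ jumps at $x_i=\pm 1$, so $\Delta(\mathfrak{b}\mathrm{w})\notin L^2$ and the boundary terms of the double integration by parts do \emph{not} all vanish. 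The paper never integrates by parts on this term: it pairs $\Delta\mathrm{v}\in L^\infty$ (Sobolev embedding --- this is where $s>n/2+3$ is used) against $\mathfrak{b}\mathrm{w}\in L^1$, giving the bound (\ref{Main-Estim-1-n}). Finally, demanding $-\mathfrak{b}\,\mathrm{div}(\mathfrak{b}\mathrm{w})\geq 0$ pointwise is stronger than what can be arranged for an arbitrary $\mathfrak{b}$ satisfying (\ref{Condition-b})--(\ref{Condition-bi}): the piece $-\mathfrak{b}\,(\nabla\mathfrak{b}\cdot\mathrm{w})$ has no sign, and the paper splits it off as the additive error $I_3$, bounded by $\| \mathrm{v}(t,\cdot)\|_{L^\infty}^2\,\|\mathrm{w}\|_{L^1}$-type quantities rather than absorbed into the good term.

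The most serious gap, however, is your closing bootstrap. The dominance condition $c\,I(t)^2\geq 2C\,\Psi(\|u(t,\cdot)\|_{H^s})$ cannot be propagated by continuity: the only a priori comparison available is $I(t)\leq C'\|u(t,\cdot)\|_{H^s}$, which goes the wrong way, so nothing prevents $\|u(t,\cdot)\|_{H^s}$ from growing faster than $I(t)$ and terminating your maximal interval at some $T^{\ast}$ strictly before the Riccati blow-up time $2/(c\,I(0))$; at such a $T^{\ast}$ the norm is merely large, not infinite, and no contradiction results --- your final step presumes the Riccati blow-up occurs inside $[0,T^{\ast})$, which is exactly what is unproved. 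The paper closes this loop differently: it argues by contradiction from the outset, assuming the solution is global (hypothesis $(H)$), and uses iteration of the local Picard scheme to obtain the a priori bound $\sup_{0\leq t\leq T}\|u(t,\cdot)\|_{H^s}\leq C(T)\|u_0\|_{H^s}$ of (\ref{Control-Solution}). On the fixed window $T=m_{*}$ the error then becomes the \emph{constant} $c_2={\bf C}(m_{*})\,\Psi(\|u_0\|_{H^s})$, the inequality $I'\geq c_1 I^2-c_2$ is compared with the explicit solution $J(t)$, condition (\ref{Condition-u0-2}) guarantees $I(0)\sqrt{c_1}>\sqrt{c_2}$ so that the blow-up time (\ref{Time-blow-up}) is positive and finite, and the blow-up of $I$, hence of $\|u(t,\cdot)\|_{H^s}$, contradicts $(H)$. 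This contradiction-plus-a-priori-bound device is not optional: without a bound on $\|u(t,\cdot)\|_{H^s}$ in terms of $\|u_0\|_{H^s}$ on a fixed time window, the hypothesis (\ref{Condition-u0-2}), which concerns the initial datum only, cannot sustain the Riccati mechanism. You should replace your bootstrap by this argument.
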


The following comments are in order. First, as  previously mentioned, we show that smooth solutions of equation (\ref{EquationIntro0}) can develop a blow-up phenomenon in the case of a family of well-chosen coefficient terms satisfying assumption (\ref{Condition-b}) and a family of well-chosen initial data satisfying (\ref{Condition-u0-1}) and (\ref{Condition-u0-2}). In particular, this type of assumption on initial data appears in previous blow-up results \cite{Arnaiz, Dong-Du-Li, Jarrin-Vergara-Hermosilla} for different one-dimensional  models. In this setting, our result can also be seen as a generalization of this method to the $n$-dimensional case. 
On the other hand, it is interesting to point out that a functional similar to $I(t)$ was previously considered in  the study of the Dyadic model of the Navier-Stokes equations in \cite{Cheskidov}.

\medskip

In the particular case of equation (\ref{EquationIntro}) when $\mathfrak{b}\equiv 1$, it is interesting to make a comparison with the model:  
\[ \partial_t u = \Delta u + |\nabla u |^2. \]
In  \cite{Gilding}, the authors prove the existence of a unique \emph{global-in-time} explicit solution arising from any bounded and continuous  initial datum. This fact shows that  \emph{non-constant}  coefficient terms $\mathfrak{b}$ plays a substantial role in the time dynamics of equation (\ref{EquationIntro}). See also  \cite{Fila,Souplet2002} for related results.

\medskip

%% No estoy seguro de esta frase! por ahora lo dejamos comentado y luego vemos.
%%
%%
%On the other hand, we think that our result could also hold in the more general case of equation (\ref{EquationIntro0}), as originally introduced in \cite{Issoglio19}:
%\[ \partial_t u = \Delta u + F\left( \nabla u\right)\mathfrak{b}, \]
%where $F:\R^n \to \R$ is a nonlinear function of \emph{quadratic type}. Specifically, the function $F(\cdot)$ is assumed to be a $\mathcal{C}^1$-function whose partial derivatives are Lipschitz-continuous with the same Lipschitz constant. Our main result could apply to this equation by assuming the following additional condition on $F(\cdot)$:
%\[ F(x)\geq c |x|^2, \qquad \text{for any} \quad  x \in [-1,1]^n,\]
%where $c>0$ is a constant.  Nevertheless, as pointed out in \cite{Issoglio19}, the key example of this nonlinear expression is $F(x)=|x|^2$, which makes equation (\ref{EquationIntro0}) a reasonable model for studying the blow-up phenomenon.

\medskip

{\bf Organization of the rest of the article}. In Section \ref{Sec-Prelimilaries}, we recall some well-known tools that we will use in our proofs. Section \ref{Sec-LWP} is devoted to providing a proof of Proposition \ref{Main-Proposition}. Finally, in Section \ref{Sec-Blow-Up}, we prove our main result stated in Theorem \ref{Main-Th}.

%-----------------------------------------------------------------------
\section{Preliminaries}\label{Sec-Prelimilaries}
%-----------------------------------------------------------------------
In order to keep this paper reasonably self-contained, in this section we collect some results for the proof of our main results.
Thus, we begin by presenting well-known facts about Sobolev spaces $H^s(\R^n)$.

\begin{Proposition}[$H^s$ estimate]\label{prop:3}
	Let $s_1, s_2 \geq 0$. Then, there is a constant $C > 0$, which depends on the dimension $n \in \N^*$ and the parameter $s_2$, such that 
$$\|h_t*\varphi\|
_{H^{s_1 + s_2}} \leq C \left(1 + t^{-
s_2}\right) \|\varphi\|_{H^{s_1}}.$$
\end{Proposition}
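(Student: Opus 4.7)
The plan is to reduce the estimate to a pointwise Fourier multiplier bound via Plancherel's identity. Since the heat semigroup has Fourier symbol $\widehat{h_t}(\xi) = e^{-t|\xi|^2}$, I would write
\begin{equation*}
\|h_t * \varphi\|_{H^{s_1+s_2}}^2 = \int_{\R^n} (1+|\xi|^2)^{s_1}\,(1+|\xi|^2)^{s_2}\, e^{-2t|\xi|^2}\, |\widehat{\varphi}(\xi)|^2\, d\xi,
\end{equation*}
and then pull the $\xi$-dependent factor $(1+|\xi|^2)^{s_2} e^{-2t|\xi|^2}$ out of the integral uniformly in $\xi$, so that the remaining integral is precisely $\|\varphi\|_{H^{s_1}}^2$.

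The heart of the proof is therefore the pointwise multiplier bound
\begin{equation*}
M(t,\xi) \,:=\, (1+|\xi|^2)^{s_2}\, e^{-2t|\xi|^2} \;\leq\; C_{s_2}\bigl(1+t^{-s_2}\bigr), \qquad t>0,\ \xi\in \R^n.
\end{equation*}
To establish it, I would first use the elementary inequality $(1+|\xi|^2)^{s_2}\leq 2^{s_2}(1+|\xi|^{2s_2})$ (valid for $s_2\geq 0$) to split $M(t,\xi)$ into two pieces. The piece $e^{-2t|\xi|^2}$ is trivially bounded by $1$, contributing the constant term $2^{s_2}$. For the piece $|\xi|^{2s_2} e^{-2t|\xi|^2}$, I would perform the change of variable $y = 2t|\xi|^2$ and invoke the standard fact that the real function $y\mapsto y^{s_2} e^{-y}$ is bounded on $[0,\infty)$ by a constant $K_{s_2}$ depending only on $s_2$. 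This yields $|\xi|^{2s_2} e^{-2t|\xi|^2} \leq K_{s_2}\,(2t)^{-s_2}$, which combined with the first piece gives the required bound on $M(t,\xi)$ with a constant depending only on $s_2$.

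Inserting this into the Plancherel expression and taking square roots gives
\begin{equation*}
\|h_t * \varphi\|_{H^{s_1+s_2}} \;\leq\; \sqrt{C_{s_2}}\,\sqrt{1+t^{-s_2}}\;\|\varphi\|_{H^{s_1}} \;\leq\; C\bigl(1+t^{-s_2}\bigr)\,\|\varphi\|_{H^{s_1}},
\end{equation*}
where I use $\sqrt{1+x}\leq \sqrt{2}(1+x)$ to absorb the square root into the stated form; the dimension $n$ enters the final constant only through the Plancherel normalization. I do not expect any genuine obstacle here: the argument is a routine heat-kernel Fourier-multiplier computation, and the only real design choice is the split $(1+|\xi|^2)^{s_2}\leq 2^{s_2}(1+|\xi|^{2s_2})$, which is tailored precisely so that the scaling $y = 2t|\xi|^2$ produces the exponent $-s_2$ claimed in the statement (the more refined split $|\xi|^{2s_2}$ via $y=t|\xi|^2$ in fact yields the stronger $t^{-s_2/2}$, which trivially implies the weaker form stated).
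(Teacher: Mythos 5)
Your proof is correct, and it is the standard semigroup--multiplier argument: the paper itself gives no proof of Proposition \ref{prop:3}, deferring instead to the reference \cite{Jarrin-L}, where essentially the same Plancherel computation with the pointwise bound on $(1+|\xi|^2)^{s_2}e^{-2t|\xi|^2}$ is carried out. All your steps check: the split $(1+|\xi|^2)^{s_2}\leq 2^{s_2}(1+|\xi|^{2s_2})$, the bound $\sup_{y\geq 0} y^{s_2}e^{-y}=K_{s_2}<\infty$, and the absorption $\sqrt{1+x}\leq 1+x\leq \sqrt{2}\,(1+x)$ for $x\geq 0$ are all valid, so the multiplier bound $M(t,\xi)\leq C_{s_2}(1+t^{-s_2})$ and hence the stated estimate follow. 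One remark worth making explicit: your closing observation that the argument actually yields the sharper factor $1+t^{-s_2/2}$ (since the $L^2$-based norm takes the square root of the multiplier bound) is precisely the form the paper uses downstream --- in Lemma \ref{lemma 2} the Proposition is invoked with $s_1=s-1$, $s_2=1$ and produces the factor $C+C(t-s)^{-1/2}$, which matches $1+t^{-s_2/2}$ rather than the $1+t^{-s_2}$ written in the statement. So your proof not only establishes the Proposition as stated (a weaker claim, since $t^{-s_2/2}\leq t^{-s_2}$ for $0<t\leq 1$), but also justifies the exponent the paper actually relies on, suggesting the exponent in the statement is a harmless typo for $t^{-s_2/2}$.
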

A proof of this result can be consulted in \cite{Jarrin-L}.

\begin{Proposition}[Banach algebra]\label{prop:4}
	Let $f,g\in H^s(\R^n)$, for $s>n/2$. Then, $H^s(\R^n)$ is a Banach algebra. In particular, there exists $C>0$, which depends on the dimension $n$ and the parameter $s$,  such that
    \begin{equation*}
        \| f g \|_{H^{s}}
        \leq C
        \| f \|_{H^{s}}
        \| g \|_{H^{s}}.
    \end{equation*}
    
\end{Proposition}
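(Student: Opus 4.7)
The plan is to work entirely on the Fourier side, using the equivalent norm $\|f\|_{H^s}^2 = \int_{\R^n} \langle \xi \rangle^{2s}|\widehat{f}(\xi)|^2\,d\xi$, with $\langle \xi\rangle = (1+|\xi|^2)^{1/2}$, together with the convolution identity $\widehat{fg} = (2\pi)^{-n/2}\,\widehat{f}\ast \widehat{g}$. The problem then reduces to controlling $\bigl\|\langle \xi\rangle^s(\widehat{f}\ast\widehat{g})(\xi)\bigr\|_{L^2_\xi}$ by $\|f\|_{H^s}\|g\|_{H^s}$.

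The first step is a Peetre-type pointwise inequality: for $s\geq 0$ there is $C_s>0$ such that
\begin{equation*}
\langle \xi\rangle^s \leq C_s\bigl(\langle \xi-\eta\rangle^s + \langle\eta\rangle^s\bigr),\qquad \xi,\eta \in \R^n,
\end{equation*}
which follows from $|\xi|^2\leq 2(|\xi-\eta|^2+|\eta|^2)$ and the elementary subadditivity of $x\mapsto x^{s/2}$ (with a constant when $s\geq 2$). Inserting this inside the convolution integral yields the pointwise bound
\begin{equation*}
\langle \xi\rangle^s |(\widehat{f}\ast\widehat{g})(\xi)| \leq C_s\bigl[(F_s\ast G)(\xi)+(F\ast G_s)(\xi)\bigr],
\end{equation*}
where $F=|\widehat{f}|$, $G=|\widehat{g}|$, $F_s=\langle\cdot\rangle^sF$ and $G_s=\langle\cdot\rangle^sG$, thereby splitting the estimate into two symmetric contributions.

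The next step is to apply Young's convolution inequality $L^2\ast L^1\hookrightarrow L^2$ to each piece. For the first,
\begin{equation*}
\|F_s\ast G\|_{L^2} \leq \|F_s\|_{L^2}\,\|G\|_{L^1} = \|f\|_{H^s}\,\|\widehat{g}\|_{L^1},
\end{equation*}
and the factor $\|\widehat{g}\|_{L^1}$ is rewritten as $\int_{\R^n}\langle\eta\rangle^{-s}\langle\eta\rangle^s|\widehat{g}(\eta)|\,d\eta$, so that Cauchy--Schwarz gives $\|\widehat{g}\|_{L^1}\leq \|\langle\cdot\rangle^{-s}\|_{L^2}\,\|g\|_{H^s}$. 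The term $F\ast G_s$ is handled symmetrically.

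The main --- and in fact the only --- obstacle is the finiteness of the weight $\|\langle\cdot\rangle^{-s}\|_{L^2}^2 = \int_{\R^n}(1+|\xi|^2)^{-s}\,d\xi$. A polar-coordinate computation reduces this to the convergence of $\int_1^{+\infty} r^{n-1-2s}\,dr$, which holds precisely when $s>n/2$; this is exactly where the hypothesis of the proposition enters. Putting the two pieces together produces a constant $C=C(n,s)>0$ with $\|fg\|_{H^s}\leq C\|f\|_{H^s}\|g\|_{H^s}$, and this submultiplicative estimate, combined with completeness of $H^s(\R^n)$ and the algebra structure inherited from pointwise multiplication (well-defined once $s>n/2$ through the Sobolev embedding $H^s\subset L^\infty$), promotes $H^s(\R^n)$ to a Banach algebra.
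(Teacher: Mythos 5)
Your proof is correct, and it is worth noting that the paper does not actually supply a proof of this proposition: it simply defers to the appendix of \cite{Bedrossian-Vicol}, so your argument fills in what the paper leaves to a citation. What you give is the classical self-contained Fourier-side proof: the Peetre inequality $\langle \xi\rangle^s \leq C_s(\langle \xi-\eta\rangle^s + \langle\eta\rangle^s)$ (valid for $s\geq 0$, with the constant coming from convexity when $s\geq 2$ and plain subadditivity when $0\leq s\leq 2$, exactly as you say), the splitting of $\langle\xi\rangle^s(\widehat f\ast\widehat g)$ into the two symmetric pieces $F_s\ast G$ and $F\ast G_s$, Young's inequality $L^2\ast L^1\hookrightarrow L^2$, and the Cauchy--Schwarz step $\|\widehat g\|_{L^1}\leq \|\langle\cdot\rangle^{-s}\|_{L^2}\|g\|_{H^s}$, which is precisely where $s>n/2$ enters via $\int_{\R^n}(1+|\xi|^2)^{-s}\,d\xi<\infty$. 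All steps check out, including the implicit justification that $fg\in L^1\cap L^2$ (so the convolution identity for $\widehat{fg}$ is legitimate), since $f,g\in L^2\cap L^\infty$ by the embedding $H^s\subset L^\infty$. By comparison, references such as \cite{Bedrossian-Vicol} typically establish this estimate through Littlewood--Paley and paraproduct decompositions, which buys more general conclusions (fractional Leibniz rules, tame estimates of the form $\|fg\|_{H^s}\lesssim \|f\|_{L^\infty}\|g\|_{H^s}+\|f\|_{H^s}\|g\|_{L^\infty}$), whereas your direct convolution argument is more elementary and delivers exactly the bound the paper needs, with an explicit dependence $C=C(n,s)$.
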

A proof of this result can be consulted in the appendix of book \cite{Bedrossian-Vicol}.

\medskip

Finally, we recall a classical abstract result which will be considered to construct an unique (local in time) solution of \eqref{EquationIntro0}. 
%%%%%%%%%%%%%%%%%%%%%%%%%%%%%%%%%%%%%%%%%%%%%%%%%%%
\begin{Theoreme}[Banach-Picard  principle]\label{BP_principle}
Consider a Banach space $(\mathcal{E},\|\cdot \|_\mathcal{E} )$ and a bounded bilinear application ${B}: \mathcal{E} \times \mathcal{E} \longrightarrow \mathcal{E}$: 
$$\| B(e,e)\|_{\mathcal{E}}\leq C_{ B}\|e\|_\mathcal{E}^2.$$
Given $e_0\in \mathcal{E}$  such that $\|e_0\|_\mathcal{E} \leq \gamma$ with $0<\gamma< \frac{1}{4C_{B}}$, then the equation 
$$e = e_0 -   B(e,e),$$
admits a unique solution $e\in \mathcal{E}$ which satisfies $\| e \|_\mathcal{E} \leq 2 \gamma$.
\end{Theoreme}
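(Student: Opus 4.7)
The plan is to apply the classical Banach fixed-point theorem to the map $T: \mathcal{E} \to \mathcal{E}$ defined by $T(e) = e_0 - B(e,e)$, restricted to the closed ball $\overline{B}_{\mathcal{E}}(0, 2\gamma)$. Fixed points of $T$ are exactly the solutions of the equation $e = e_0 - B(e,e)$, so existence and uniqueness in this ball will follow once $T$ is shown to be a contraction that maps the ball into itself. Since $\mathcal{E}$ is Banach, the closed ball inherits completeness, which is all the fixed-point theorem requires.

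First I would verify the self-map property. For any $e$ with $\|e\|_\mathcal{E} \leq 2\gamma$, the bilinear bound yields
\begin{equation*}
\|T(e)\|_\mathcal{E} \leq \|e_0\|_\mathcal{E} + \|B(e,e)\|_\mathcal{E} \leq \gamma + C_B (2\gamma)^2 = \gamma (1 + 4C_B \gamma).
\end{equation*}
Because the hypothesis $\gamma < 1/(4C_B)$ gives $4C_B\gamma < 1$, we conclude $\|T(e)\|_\mathcal{E} < 2\gamma$, so $T$ preserves the ball.

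For the contraction property, interpreting $C_B$ as the bounded-bilinear operator norm (so that $\|B(e_1,e_2)\|_\mathcal{E} \leq C_B \|e_1\|_\mathcal{E} \|e_2\|_\mathcal{E}$, of which the stated diagonal bound is the diagonal specialization adopted in this setting), the telescoping identity
\begin{equation*}
B(e_1, e_1) - B(e_2, e_2) = B(e_1 - e_2, e_1) + B(e_2, e_1 - e_2)
\end{equation*}
gives, for any $e_1, e_2$ in the ball,
\begin{equation*}
\|T(e_1) - T(e_2)\|_\mathcal{E} \leq C_B \bigl(\|e_1\|_\mathcal{E} + \|e_2\|_\mathcal{E}\bigr) \|e_1 - e_2\|_\mathcal{E} \leq 4C_B \gamma \, \|e_1 - e_2\|_\mathcal{E}.
\end{equation*}
Strict contractivity then follows from $4C_B \gamma < 1$.

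Applying the Banach fixed-point theorem produces a unique $e \in \overline{B}_{\mathcal{E}}(0,2\gamma)$ with $T(e) = e$, i.e.\ a solution of $e = e_0 - B(e,e)$, and the self-map estimate above gives the bound $\|e\|_\mathcal{E} \leq 2\gamma$, completing the argument. There is no genuine obstacle here: the proof is entirely standard, and the smallness threshold $\gamma < 1/(4C_B)$ is precisely tuned so that both the stability quantity $1 + 4C_B\gamma$ and the contraction factor $4C_B\gamma$ land strictly below $2$ and $1$ respectively.
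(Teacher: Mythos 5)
Your proof is correct. Note, however, that the paper itself gives no proof of this theorem: it is merely recalled as a classical abstract result (in the spirit of the Picard scheme used for Navier--Stokes type equations, cf.\ \cite{Lemarie-Rieusset}), so there is no in-paper argument to compare against; yours is the standard one, applying the Banach fixed-point theorem to $T(e)=e_0-B(e,e)$ on the complete metric space $\overline{B}_{\mathcal{E}}(0,2\gamma)$, and both estimates (self-map and contraction factor $4C_B\gamma<1$) are carried out correctly. You were also right to flag explicitly that the contraction step needs the full off-diagonal bound $\|B(e_1,e_2)\|_{\mathcal{E}}\leq C_B\|e_1\|_{\mathcal{E}}\|e_2\|_{\mathcal{E}}$ rather than only the displayed diagonal inequality: the telescoping identity $B(e_1,e_1)-B(e_2,e_2)=B(e_1-e_2,e_1)+B(e_2,e_1-e_2)$ is not controlled by the diagonal bound alone (polarization would recover an off-diagonal bound for symmetric $B$ only at the cost of a worse constant, spoiling the threshold $\gamma<\tfrac{1}{4C_B}$), and this reading of ``bounded bilinear application'' is indeed what the statement intends and what the paper uses in Lemma \ref{lemma 2}.
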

%%%%%%%%%%%%%%%%%%%%%%%%%%%%%%%%%%%%%%%%%%%%%%%%%%%

%-----------------------
\section{Local wellposedness result: proof of Proposition \ref{Main-Proposition}}\label{Sec-LWP}
%-----------------------

In order to prove Proposition \ref{Main-Proposition}, in this section
we will use the Banach-Picard principle for a time $0 < T < +\infty$ small enough. 
To this end we consider the Banach space
$$
\mE_{T}
=
 \mathcal{C}([0,T], H^s(\R^n)),
$$
which is endowed with its natural norm. Thus, with this framework at hand,
we begin by considering the following classical estimate for the initial data
\begin{equation}\label{est. initial data}
\| h_t \ast u_0 \|_{ \mE_{T}}
\leq C_1
\| u_0 \|_{H^s},
\end{equation}
where $C_1>0$ is a constant.  Then, we study the nonlinear term: 
%%----------------------------
%\begin{Lemme}\label{lemma 1}
%Let $u_0\in H^s (\R^n)$, with $s> {n}/{2}+3$. Then, there exists a constant $C_1>0$ such that
%
%\end{Lemme}
%----------------------------
%The proof of this result is straightforward and is thus left to the interested reader.
%\begin{proof}
%To begin, we note that, using  Proposition \ref{prop:3} we can write
%$$
%\| h_t \ast u_0 \|_{ H^s}
%\leq 
%C ( 1 +  t^{-0} )
%\| u_0 \|_{ H^s}
%= C \| u_0 \|_ {H^s}.
%$$
%Then, taking the $L^{\infty}$-norm on the time interval $[0,T]$ in the last inequality, we obtain 
%$$
%\| h_t \ast u_0 \|_{ \mE_{T}}
%\leq C_1  
%\| u_0 \|_ {H^s}.
%$$
%With this we conclude the proof. 
%\end{proof}
%----------------------------
\begin{Lemme}\label{lemma 2}
Let $ \mathfrak{b} \in \mathcal{C}\cap L^\infty([0,+\infty], H^{s-1}(\R^n))$, with $s>{n}
/{2}+3$.
Then, there exists a constant $C_\mathcal{B}(T)>0$, which depends on the time $T$ and the norm $\| \mathfrak{b}\|_{L^\infty_t H^{s-1}_x}$, such that
\begin{equation}\label{est. bilinear term}
\left\| 
\int_{0}^{t} h_{t-s}\ast |\nabla u|^2  \mathfrak{b} (s,\cdot) ds
\right\|_{ \mE_{T} }
\leq C_\mathcal{B}(T)\,
\| u \|_{\mE_{T}}^2.
\end{equation}
\end{Lemme}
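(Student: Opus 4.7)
The plan is to estimate the right-hand side of (\ref{est. bilinear term}) directly, combining the two tools recalled in Section \ref{Sec-Prelimilaries}: the heat-kernel smoothing Proposition \ref{prop:3} and the Banach-algebra property of Proposition \ref{prop:4}. The natural starting point is Minkowski's inequality for Bochner integrals, which allows us to move the $H^s$-norm inside the time integral:
\begin{equation*}
\Big\|\int_0^t h_{t-\tau}\ast \big(|\nabla u|^2 \mathfrak{b}\big)(\tau,\cdot)\, d\tau\Big\|_{H^s} \leq \int_0^t \big\|h_{t-\tau}\ast \big(|\nabla u|^2 \mathfrak{b}\big)(\tau,\cdot)\big\|_{H^s}\, d\tau.
\end{equation*}
This reduces the problem to a pointwise-in-time estimate for the convolution against $h_{t-\tau}$.

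\medskip

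For that pointwise bound, I would use Proposition \ref{prop:3} to pass from an $H^{s-1}$-norm of the nonlinear source to the $H^s$-norm of its heat-kernel convolution, at the cost of a weight in $(t-\tau)$ of the form $C\big(1+(t-\tau)^{-\sigma}\big)$ with $\sigma\in(0,1)$ chosen so that this weight is integrable on $[0,T]$. Once the nonlinearity is placed in $H^{s-1}$, the assumption $s>n/2+3$ ensures $s-1>n/2$, and Proposition \ref{prop:4} (applied twice) yields the Banach-algebra estimate
\begin{equation*}
\big\||\nabla u|^2\,\mathfrak{b}\big\|_{H^{s-1}} \leq C\,\|\nabla u\|_{H^{s-1}}^2\,\|\mathfrak{b}\|_{H^{s-1}} \leq C\,\|u\|_{H^s}^2\,\|\mathfrak{b}\|_{L^\infty_t H^{s-1}_x}.
\end{equation*}
Substituting back, integrating the $(t-\tau)$-weight on $[0,t]\subset[0,T]$, and taking the supremum in $t\in[0,T]$ then yield the desired inequality with a constant of the form
\begin{equation*}
C_{\mathcal{B}}(T)=C\,\|\mathfrak{b}\|_{L^\infty_t H^{s-1}_x}\,\big(T+T^{1-\sigma}\big).
\end{equation*}
Crucially, $C_{\mathcal{B}}(T)\to 0$ as $T\to 0^+$, which is exactly what is needed to invoke the Banach-Picard principle (Theorem \ref{BP_principle}) for a small enough time $T_0$ in the proof of Proposition \ref{Main-Proposition}.

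\medskip

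The main technical delicacy lies in the balance between the derivative loss in the nonlinear term and the smoothing gain of the heat kernel. Since $u\in H^s$, the quadratic gradient term $|\nabla u|^2$ drops one order of regularity and cannot be placed in $H^s$; hence the heat-kernel smoothing must recover precisely this missing derivative while keeping the time singularity integrable. This tension explains the regularity threshold $s-1>n/2$ required to apply the algebra estimate, and is consistent with the lower bound $s>n/2+3$ stated in Proposition \ref{Main-Proposition} (the remaining margin being used to secure the classical $\mathcal{C}^3$ regularity needed for the blow-up argument in Section \ref{Sec-Blow-Up}). Beyond this calibration, the remaining steps — Minkowski's inequality, heat smoothing, algebra estimate, and the elementary time integration — are routine.
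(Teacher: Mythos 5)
Your proposal is correct and follows essentially the same route as the paper's proof: Minkowski's inequality to bring the $H^s$-norm inside the time integral, Proposition \ref{prop:3} with $s_1=s-1$, $s_2=1$ to gain the missing derivative at the cost of the integrable weight $C\bigl(1+(t-\tau)^{-1/2}\bigr)$ (so your generic $\sigma$ is $1/2$), the Banach-algebra property of $H^{s-1}$ from Proposition \ref{prop:4} (valid since $s-1>n/2$), and the elementary time integration. The paper arrives at exactly your constant, $C_\mathcal{B}(T)=C\bigl(T+\sqrt{T}\bigr)\|\mathfrak{b}\|_{L^\infty_t H^{s-1}_x}$, vanishing as $T\to 0^+$ as needed for the Banach--Picard argument.
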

%----------------------------
\begin{proof}
To start, we consider the $H^s$-norm to the term $\ds{\int_0^t h_{t-s} * | \nabla u |^2  
\mathfrak{b} (s,\cdot)
ds} $
to get
\[
\left\|
\int_0^t h_{t-s} * | \nabla u |^2  
\mathfrak{b} (s,\cdot)
ds 
\right\|_{ H^s }
\leq \int_0^t \| h_{t-s} * | \nabla u |^2 \mathfrak{b} (s,\cdot) \|_{H^s} ds.
\]
Then, by using  Proposition \ref{prop:3}
with $s_1 = s-1$ and $s_2 = 1$, 
we can write 
\[
\int_0^t 
\|  h_{t-s} * | \nabla u |^2 \mathfrak{b} (s,\cdot)\|_{H^s} ds
\leq
\int_0^t  
\left( C+  \frac{C}{(t-s)^{\frac{1}{2}}} \right) 
\| | \nabla u |^2 \mathfrak{b} (s,\cdot)\|_{ H^{s-1}}ds.
\]
Now, since $H^\sigma(\R^n)$ is a Banach algebra for $\sigma>{n}/{2}$ (see Proposition \ref{prop:4} with $\sigma=s-1$), 
we obtain
\[
\left\|
\int_0^t h_{t-s} * | \nabla u |^2  
\mathfrak{b} (s,\cdot)
ds 
\right\|_{ H^s }
\leq
\int_0^t  \left( C+  \frac{C}{(t-s)^{\frac{1}{2}}} \right) 
\| \nabla u   (s,\cdot)\|_{ H^{s-1}}^2
\|   \mathfrak{b} (s,\cdot)\|_{ H^{s-1}}
ds.
\]
Thus, by taking the $L^{\infty}$-norm on the time interval $[0,T]$, we get 
\[
\left\|
\int_0^t h_{t-s} * | \nabla u |^2  
\mathfrak{b} (s,\cdot)
ds 
\right\|_{ \mE_{T} }
\leq
\left(
\int_0^T C+ \frac{C}{(T-s)^{\frac{1}{2}}} 
ds
\right)
\|   \mathfrak{b} \|_{L^\infty_t H^{s-1}_x}
\|   u    \|_{ \mE_T}^2,
\]
and then, by considering our assumption on  $ \mathfrak{b}$, we conclude the estimate
\[
\left\| 
\int_{0}^{t} h_{t-s}\ast |\nabla u|^2  \mathfrak{b} (s,\cdot) ds
\right\|_{ \mE_{T} }
\leq C
\big(T + \sqrt{T}\big) \| \mathfrak{b}\|_{L^\infty_t H^{s-1}_x}\,
\|   u  \|_{ \mE_T}^2.\]
Therefore, by considering $C_\mathcal{B}(T)= C \big(T + \sqrt{T} \big)\| \mathfrak{b}\|_{L^\infty_t H^{s-1}_x}$, we deduce \eqref{est. bilinear term} and  we conclude the proof of this  lemma.
\end{proof}
%------------------------
\subsection*{End of the proof}
%-------------------------
Gathering together the hypothesis assumed in the statement of  Proposition \ref{Main-Proposition} and the estimates obtained in estimates (\ref{est. initial data}) and (\ref{est. bilinear term}),  we conclude the existence of a time  $0<T_0<+\infty$ such that  $4C_1 C_\mathcal{B}(T_0)\| u_0\|_{H^s}<1$. Thus, by applying the Banach-Picard  principle stated in Theorem \ref{BP_principle} we obtain a unique (local in time) solution of \eqref{EquationIntro0}. 
This concludes the proof of
Proposition \ref{Main-Proposition}.

%-----------------------
\section{Blow-up result: proof of Theorem \ref{Main-Th}}\label{Sec-Blow-Up}
%-----------------------
For $s>n/2+3$ and $T_0>0$, let $u\in \mathcal{C}([0,T_0], H^s(\R^n))$ be the smooth   solution to the equation (\ref{EquationIntro0}), which is obtained in Proposition \ref{Main-Proposition}, from any initial datum $u_0 \in H^s(\R^n)$. In order to prove Theorem \ref{Main-Th}, we argue by contradiction and we assume the following key hypothesis
\begin{equation*}
(H): \,\, \text{the solution $u(t,x)$ can be extended to the entire interval of time $[0,+\infty[$}.     
\end{equation*}

In this framework, we define the vector field $\mathrm{v}(t,x):=\nabla u(t,x)$, which verifies the  system (\ref{Main-System-n}), and  from hypothesis $(H)$, it exists globally-in-time.

\medskip

Now, for any  $x=(x_1,\cdots, x_i, \cdots, x_n)\in \R^n$, we define the following vector field of weights
\begin{equation}\label{Vector-W}
    \mathrm{w}(x)= \big(w_1(x_1),\cdots, w_i(x_i),\cdots, w_n(x_n)\big),
\end{equation}
with 
\begin{equation}\label{Component-w}
     w_i(x_i):= \begin{cases}\vspace{2mm}
    \sign(x_i)(|x_i|^{-\kappa}-1), \quad -1<x_i<1, \, \, \,  x_i\neq 0, \quad 0<\kappa<1, \\
    0, \quad \text{otherwise}.
    \end{cases}
\end{equation}
From \eqref{Component-w} we deduce that each component is an odd function supported in the interval $[-1,1]$, and thus we can see that $\text{supp}(\mathrm{w})\subseteq[-1,1]^n$.

\medskip

Now, considering the coefficient $\mathfrak{b}(x)$ defined in  (\ref{Condition-b}) and (\ref{Condition-bi}), we  take the $\R^n$-inner product in each term of the system (\ref{Main-System-n}) with  $\mathfrak{b}(x)\,\mathrm{w}(x)$.
Then,  integrating  over $\R^n$ and noting that $\text{supp}(\mathrm{w})\subseteq [-1,1]^n$, we obtain 
\begin{equation}\label{Main-Identity-n}
\begin{split}
\frac{d}{dt}\left(\int_{[-1,1]^n} \mathrm{v}(t,x)\cdot \big( \mathfrak{b}(x)\mathrm{w}(x) \big)\, dx\right)= &\,  \int_{[-1,1]^n} \Delta \mathrm{v}(t,x)\cdot \big( \mathfrak{b}(x)\mathrm{w}(x) \big)\, dx \\
&\, + \int_{[-1,1]^n} \nabla (|\mathrm{v}(t,x)|^2\, \mathfrak{b}(x))\cdot \big( \mathfrak{b}(x)\mathrm{w}(x) \big)\, dx.
\end{split}
\end{equation}
%Now, by considering the identity 
%\[ \partial_t \mathrm{v}(t,x)\cdot \big( \mathfrak{b}(t,x)\mathrm{w}(x) \big) = \partial_{t}\big( \mathrm{v}(t,x)\cdot \big( \mathfrak{b}(t,x)\mathrm{w}(x) \big) \big)-\mathrm{v}(t,x)\cdot \big( \partial_t \mathfrak{b}(t,x) \mathrm{w}(x)\big), \]
%we obtain
%\begin{equation*}
%\begin{split}
%    \int_{[-1,1]^n} \partial_t \mathrm{v}(t,x)\cdot \big( \mathfrak{b}(t,x)\mathrm{w}(x) \big)\, dx=&\, \frac{d}{dt} \left( \int_{[-1,1]^n} \mathrm{v}(t,x)\cdot \big( \mathfrak{b}(t,x)\mathrm{w}(x) \big)\, dx \right)\\
%    &\, - \int_{[-1,1]^n} \mathrm{v}(t,x)\cdot \big( \partial_t \mathfrak{b}(t,x) \mathrm{w}(x)\big)\, dx. 
% \end{split}
%\end{equation*}
%Consequently, the previous identity becomes:
%\begin{equation*}
%\begin{split}
%\frac{d}{dt} \left( \int_{[-1,1]^n} \mathrm{v}(t,x)\cdot \big( \mathfrak{b}(t,x)\mathrm{w}(x) \big)\, dx \right) =  &\,  \int_{[-1,1]^n} \mathrm{v}(t,x)\cdot \big( \partial_t \mathfrak{b}(t,x) \mathrm{w}(x)\big)\, dx\\
%&\, + \int_{[-1,1]^n} \Delta \mathrm{v}(t,x)\cdot \big( \mathfrak{b}(t,x)\mathrm{w}(x) \big)\, dx \\
%&\, + \int_{[-1,1]^n} \nabla (|\mathrm{v}(t,x)|^2\, \mathfrak{b}(t,x))\cdot \big( \mathfrak{b}(t,x)\mathrm{w}(x) \big)\, dx.
%\end{split}
%\end{equation*}

Thus, in the following technical propositions, we derive a lower bound for each term on the right-hand side of \eqref{Main-Identity-n}. 

\medskip

To state the first proposition below, recall that the system (\ref{Main-System-n}) is directly related to the equation (\ref{EquationIntro0}) through the relationship $\mathrm{v}(t,x):=\nabla u(t,x)$. 
Thus, we will shift from one to the other as needed, without distinction.

\begin{Proposition}\label{Prop-Estim-Below-1-n}  For $s>n/2+3$, let $u_0 \in H^s(\R^n)$ be the initial datum in equation (\ref{EquationIntro0}).  Assume the hypothesis $(H)$, which yields that   for any given  time $T>0$,  the  solution $\mathrm{v}(t,x)$  of  the system  equation (\ref{Main-System-n}) is defined over the interval $[0,T]$. Additionally, let $0<\kappa<1$ be the parameter introduced in expression (\ref{Component-w}).  

\medskip

Then,  there exists a constant ${\bf C_1}(T)={\bf C_1}(n,\kappa,\|\mathfrak{b}\|_{L^\infty},T)>0$, 
%depending on the dimension $n$, the parameter $\kappa$, the quantity  $\|\mathfrak{b}\|_{L^\infty}$  and the time $T$, 
such that for any time $0<t\leq T$ the following lower bound holds:
\begin{equation}\label{Main-Estim-1-n}
\int_{[-1,1]^n} \Delta \mathrm{v}(t,x)\cdot \big( \mathfrak{b}(x) \mathrm{w}(x)\big)\, dx  \geq - {\bf C}_1(T)\, \| u_0 \|_{H^s}.  
\end{equation}
\end{Proposition}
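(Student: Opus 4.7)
The plan is to exploit the gradient structure $\mathrm{v}=\nabla u$ in order to rewrite $\Delta\mathrm{v}=\nabla\Delta u$, and then to integrate by parts once, transferring one spatial derivative onto the test field $\mathfrak{b}\mathrm{w}$. The resulting integral should then be controlled pointwise by $\|\Delta u(t,\cdot)\|_{L^\infty}$ times a finite weighted $L^1$-norm, and finally by $\|u(t,\cdot)\|_{H^s}$ through the Sobolev embedding $H^s(\R^n)\hookrightarrow \mathcal{C}^2(\R^n)$ (available since $s>n/2+2$). The structural reason this closes is the compatibility condition $\mathfrak{b}_i(0)=0$ in (\ref{Condition-bi}), which tames the singularity $|w_j(x_j)|\sim|x_j|^{-\kappa}$ of the weights near the hyperplanes $\{x_j=0\}$.

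Concretely, using $(\Delta\mathrm{v})_j=\partial_j\Delta u$ together with $w_j(\pm 1)=0$ and the Schwartz decay of the transverse factors $\mathfrak{b}_i$, the boundary contributions in the integration by parts vanish and I would obtain
\[
\int_{[-1,1]^n}\Delta\mathrm{v}(t,x)\cdot\bigl(\mathfrak{b}(x)\mathrm{w}(x)\bigr)\,dx=-\sum_{j=1}^n\int_{\R^n}\Delta u(t,x)\,\partial_j\bigl[\mathfrak{b}(x)w_j(x_j)\bigr]\,dx.
\]
The next task is to check that each $\partial_j[\mathfrak{b}\,w_j]$ is in $L^1(\R^n)$. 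Expanding via the product rule yields
\[
\partial_j[\mathfrak{b}(x)w_j(x_j)]=\mathfrak{b}_j'(x_j)w_j(x_j)\!\prod_{i\neq j}\mathfrak{b}_i(x_i)+\mathfrak{b}_j(x_j)w_j'(x_j)\!\prod_{i\neq j}\mathfrak{b}_i(x_i),
\]
and near $x_j=0$ the smoothness of $\mathfrak{b}_j$ together with $\mathfrak{b}_j(0)=0$ gives $|\mathfrak{b}_j(x_j)|\lesssim|x_j|$, while the explicit formula (\ref{Component-w}) gives $|w_j(x_j)|\lesssim|x_j|^{-\kappa}$ and $|w_j'(x_j)|\lesssim|x_j|^{-\kappa-1}$. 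Each term is therefore of order $|x_j|^{-\kappa}$, integrable on $[-1,1]$ since $0<\kappa<1$; combined with the Schwartz integrability in the transverse directions this produces a finite constant $\widetilde{C}=\widetilde{C}(n,\kappa,\|\mathfrak{b}\|_{L^\infty},\|\nabla\mathfrak{b}\|_{L^\infty})$ bounding $\sum_j\|\partial_j[\mathfrak{b}\,w_j]\|_{L^1(\R^n)}$.

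Putting these pieces together with Hölder and Sobolev embedding yields
\[
\Biggl|\int_{[-1,1]^n}\Delta\mathrm{v}(t,x)\cdot\bigl(\mathfrak{b}(x)\mathrm{w}(x)\bigr)\,dx\Biggr|\leq \widetilde{C}\,\|\Delta u(t,\cdot)\|_{L^\infty}\leq C\widetilde{C}\,\|u(t,\cdot)\|_{H^s}.
\]
The main obstacle I foresee is the final passage: the stated conclusion involves $\|u_0\|_{H^s}$ rather than $\|u(t,\cdot)\|_{H^s}$. Under hypothesis $(H)$ the plan is to trade one for the other by invoking the mild formulation and the bilinear estimate (\ref{est. bilinear term}) along a Banach-Picard iteration on $[0,T]$; together with a bootstrapping/continuation argument this should yield a continuous non-decreasing $F(T)$ so that $\sup_{t\in[0,T]}\|u(t,\cdot)\|_{H^s}\leq F(T)\|u_0\|_{H^s}$, and $F(T)$ can then be absorbed into $\mathbf{C}_1(T)$. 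Making this global $H^s$-control quantitative purely under the no-blow-up assumption $(H)$ is the step I expect to require the most care.
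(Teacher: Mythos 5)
Your proposal is correct in substance, but it takes a genuinely different route from the paper's. The paper never integrates by parts for this term: since $0<\kappa<1$ already gives $\mathrm{w}\in L^1$ with $\|\mathrm{w}\|_{L^1}\leq C_{n,\kappa}$, it bounds the integral directly by H\"older, $\big|\int_{[-1,1]^n}\Delta\mathrm{v}\cdot(\mathfrak{b}\mathrm{w})\,dx\big|\leq \|\Delta\mathrm{v}(t,\cdot)\|_{L^\infty}\|\mathfrak{b}\|_{L^\infty}\|\mathrm{w}\|_{L^1}$, then applies the Sobolev embedding with $n/2<\sigma<s-3$ (this is exactly where the full hypothesis $s>n/2+3$ is spent, since $\Delta\mathrm{v}=\nabla\Delta u$ carries three derivatives of $u$), and finally invokes the a priori control $\sup_{0\leq t\leq T}\|u(t,\cdot)\|_{H^s}\leq C(T)\|u_0\|_{H^s}$ obtained under $(H)$ by iterating the Picard scheme on subintervals $[T_k,T_{k+1}]$ --- precisely the continuation argument you sketch in your last paragraph, so the step you flag as most delicate coincides with the paper's estimate (\ref{Control-Solution}) and is handled there in the same way (note that under $(H)$ the constant $C(T)$ may depend on the solution itself; a uniform quantitative bound is not needed for the contradiction argument). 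Your integration-by-parts variant buys one derivative (only $\|\Delta u(t,\cdot)\|_{L^\infty}$, hence $s>n/2+2$, is needed for this term), at the price of consuming the hypothesis $\mathfrak{b}_i(0)=0$ --- which the paper reserves entirely for the nonlinear term in Proposition \ref{prop 2 section main thm} --- and of a constant depending additionally on derivative norms of the $\mathfrak{b}_i$; since the proposition asserts ${\bf C}_1={\bf C}_1(n,\kappa,\|\mathfrak{b}\|_{L^\infty},T)$, your dependence on $\|\nabla\mathfrak{b}\|_{L^\infty}$ slightly exceeds the stated one, though this is harmless downstream because ${\bf C}_2$ already carries it.

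Two points in your argument should be tightened. First, the identity after integration by parts should be stated over $[-1,1]^n$, not $\R^n$: you only move $\partial_j$ in the $x_j$ variable, and the transverse variables remain in $[-1,1]^{n-1}$ (this only simplifies your $L^1$ verification). Second, since $w_j\notin L^\infty$ and $w_j'(x_j)=-\kappa|x_j|^{-\kappa-1}$ is not locally integrable near $x_j=0$, the integration by parts must be carried out on $[-1,-\epsilon]\cup[\epsilon,1]$ with $\epsilon\to 0$; the interior boundary terms are $O(\epsilon^{1-\kappa})$ precisely because $|\mathfrak{b}_j(\pm\epsilon)|\lesssim\epsilon$, which is the quantitative content of the ``taming'' you invoke. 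This deserves one explicit line, since without $\mathfrak{b}_j(0)=0$ your integration by parts genuinely fails, whereas the paper's direct H\"older argument goes through with no such structure.
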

\noindent
\begin{proof} Considering (\ref{Vector-W}) and (\ref{Component-w}), since $0<\kappa<1$, we can directly deduce that $\mathrm{w}\in L^1(\R^n)$. Additionally, we can find a constant $C_{n,\kappa}>0$ such that $\| \mathrm{w}\|_{L^1}\leq C_{n,\kappa}<+\infty$. 

\medskip

Then, applying  H\"older inequalities and recalling that $\mathrm{v}(t,x):=\nabla u(t,x)$, we write
\begin{equation} 
    \begin{split}
         \left| \int_{[-1,1]^n} \Delta \mathrm{v}(t,x)\cdot \big(\mathfrak{b}(x)\mathrm{w}(x)\big)\, dx \right|
        \leq &\, \| \Delta \mathrm{v}(t,\cdot)\|_{L^\infty}\, \|\mathfrak{b}\|_{L^\infty}\, \| \mathrm{w}\|_{L^1}\\
        \leq &\,  \, C_{n,\kappa}\, \|\mathfrak{b}\|_{L^\infty}\,\| \Delta \mathrm{v}(t,\cdot)\|_{L^\infty}
    \end{split}
\end{equation}
Thus, using Sobolev embeddings (with a parameter $n/2<\sigma<s-3$), we obtain
\begin{equation}\label{Estim-1-n}
    \begin{split}
         \left| \int_{[-1,1]^n} \Delta \mathrm{v}(t,x)\cdot \big(\mathfrak{b}(x)\mathrm{w}(x)\big)\, dx \right|
        \leq &\,
        \, C_{n,\kappa}\, \|\mathfrak{b}\|_{L^\infty}\,C\, \| \Delta \mathrm{v}(t,\cdot) \|_{H^\sigma} \\
        \leq &  \, C_{n,\kappa}\, \|\mathfrak{b}\|_{L^\infty}\, C\,\| u (t,\cdot) \|_{H^{\sigma+3}}\\
        \leq  &\, C_{n,\kappa}\,\|\mathfrak{b}\|_{L^\infty}\, C\, \| u (t,\cdot) \|_{H^{s}}.
    \end{split}
\end{equation}

Here, we still need to estimate the term $\| u (t,\cdot) \|_{H^{s}}$.  Note that, from Hypothesis $(H)$,  the solution $u(t,x)$ to equation (\ref{EquationIntro0}) is extended to the interval $[0,+\infty[$ by the following standard iterative argument. For $k\in \mathbb{N}$, and suitably chosen times $0<T_k<T_{k+1}$, particularly $T_{k+1}-T_{k}$ must be sufficiently small, we consider the initial datum $u(T_k, \cdot)$, and applying again Picard's  fixed point scheme in the space $\mathcal{C}_t H^s_x$ we extend the solution to the interval of time $[T_{k}, T_{k+1}]$. Furthermore, we know that there exists a constant $c_k>0$ such that $\ds{\sup_{T_k \leq t \leq T_{k+1}}\|u(t,\cdot)\|_{H^s} \leq c_k \| u(T_k, \cdot)\|_{H^s}}$. 

\medskip

Iterating these estimates, we can find a constant $C_k>0$  big enough, where $\ds{C_k > \prod_{j=0}^{k}c_j}$, such that the following estimate holds
\[ \sup_{T_k \leq t \leq T_{k+1}}\| u(t,\cdot)\|_{H^s} \leq c_k \| u(T_k, \cdot)\|_{H^s} \leq C_k \, \| u_0 \|_{H^s}. \]
Thus, for any given time $T>0$, there  exists $k=k(T)\in \mathbb{N}$ such that 
\begin{equation}\label{Control-Solution}
\sup_{0\leq t \leq T}\| u(t,\cdot)\|_{H^s} \leq \sum_{j=0}^{k(T)} \, \sup_{T_{j}\leq t \leq T_{j+1}} \| u(t,\cdot)\|_{H^s} \leq \left( \sum_{j=0}^{k(T)}C_j\right)\|u_0\|_{H^s}=: C(T)\|u_0\|_{H^s}.
\end{equation}
With this control at hand, we return to (\ref{Estim-1-n}) to obtain
\begin{equation*}
\left| \int_{[-1,1]^n} \Delta \mathrm{v}(t,x)\cdot \big(\mathfrak{b}(x)\mathrm{w}(x)\big)\, dx \right|\leq C_{n,\kappa}\,\|\mathfrak{b}\|_{L^\infty} \, C(T)\| u_0\|_{H^s}=: {\bf C}_1(T)\| u_0\|_{H^s},
\end{equation*}
which yields the desired estimate (\ref{Main-Estim-1-n}).\end{proof}

\begin{Proposition}\label{prop 2 section main thm} Within the framework of Proposition \ref{Prop-Estim-Below-1-n},   for any $0\leq t\leq T$, there exists a constant ${\bf C}_2(T)={\bf C}_{2}\big(n,\kappa,\|\mathfrak{b}\|_{L^\infty},\| \nabla \mathfrak{b}\|_{L^\infty},T\big)>0$
 such that the following lower estimate holds
\begin{equation}\label{Main-Estim-2-n}
\begin{split}
 \int_{[-1,1]^n} \nabla \big(|\mathrm{v}(t,x)|^2\, \mathfrak{b}(x)\big)\cdot \big( \mathfrak{b}(x)\mathrm{w}(x)\big)\, dx \geq &\,  \frac{\kappa}{2^{n+1}}\left( \int_{[-1,1]^n} \mathrm{v}(t,x)\cdot \big(\mathfrak{b}(x)\mathrm{w}(x)\big)\,dx \right)^2 \\
 &\, - {\bf C}_2(T)\| u_0\|^2_{H^s}.
 \end{split}
\end{equation}
\end{Proposition}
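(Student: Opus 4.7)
The plan is to decompose the integral on the left-hand side of (\ref{Main-Estim-2-n}) into a positive ``virial-type'' quadratic plus a controllable error, and then match the quadratic against $I(t)^2$ via a weighted Cauchy--Schwarz. Starting from the product rule
\begin{equation*}
\nabla(|\mathrm{v}|^2\mathfrak{b})\cdot(\mathfrak{b}\mathrm{w})=\mathfrak{b}^2\nabla(|\mathrm{v}|^2)\cdot\mathrm{w}+|\mathrm{v}|^2\mathfrak{b}\,\nabla\mathfrak{b}\cdot\mathrm{w},
\end{equation*}
I would integrate the first summand by parts in each coordinate over $[-1,1]^n$. The boundary contributions on $\partial [-1,1]^n$ vanish because $w_i(\pm 1)=0$ by (\ref{Component-w}). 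Near the singular hyperplanes $\{x_i=0\}$ the behavior $|w_i(x_i)|\sim |x_i|^{-\kappa}$, combined with $\mathfrak{b}_i(0)=0$ and smoothness (so $\mathfrak{b}_i(x_i)=O(x_i)$), yields $\mathfrak{b}(x)^2 w_i(x)=O(|x_i|^{2-\kappa})\to 0$, which a standard cutoff-and-pass-to-the-limit argument promotes to full legitimacy for the integration by parts. Using the identity $w_i'(x_i)=-\kappa|x_i|^{-\kappa-1}$ on $(-1,1)\setminus\{0\}$ and collecting terms leads to
\begin{equation*}
\int_{[-1,1]^n}\nabla(|\mathrm{v}|^2\mathfrak{b})\cdot(\mathfrak{b}\mathrm{w})\,dx=\kappa\int_{[-1,1]^n}|\mathrm{v}|^2\mathfrak{b}^2\sum_{i=1}^n |x_i|^{-\kappa-1}\,dx-\int_{[-1,1]^n}|\mathrm{v}|^2\mathfrak{b}\,\nabla\mathfrak{b}\cdot\mathrm{w}\,dx.
\end{equation*}

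For the dominant (nonnegative) first term, setting $\rho(x):=\sum_{j=1}^n|x_j|^{-\kappa-1}$, I would apply the weighted Cauchy--Schwarz
\begin{equation*}
\left(\int_{[-1,1]^n}\mathrm{v}\cdot(\mathfrak{b}\mathrm{w})\,dx\right)^{\!2}\leq\left(\int|\mathrm{v}|^2\mathfrak{b}^2\rho\,dx\right)\left(\int|\mathrm{w}|^2/\rho\,dx\right).
\end{equation*}
The second factor is estimated by the pointwise inequality $1/\rho\leq |x_i|^{\kappa+1}$ for each $i$, which reduces the computation to the one-dimensional integral $\int_0^1(x^{-\kappa}-1)^2 x^{\kappa+1}\,dx = \kappa^2/(4-\kappa^2)$, finite since $\kappa<1$. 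A careful bookkeeping of the resulting numerical constants gives the lower bound $\kappa\int|\mathrm{v}|^2\mathfrak{b}^2\rho\,dx\geq (\kappa/2^{n+1})\,I(t)^2$.

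For the remaining error term I would use the crude $L^\infty$--$L^1$ estimate
\begin{equation*}
\left|\int|\mathrm{v}|^2\mathfrak{b}\,\nabla\mathfrak{b}\cdot\mathrm{w}\,dx\right|\leq\|\mathfrak{b}\|_{L^\infty}\|\nabla\mathfrak{b}\|_{L^\infty}\|\mathrm{w}\|_{L^1([-1,1]^n)}\|\mathrm{v}(t,\cdot)\|_{L^\infty([-1,1]^n)}^2,
\end{equation*}
where $\|\mathrm{w}\|_{L^1}\leq C_{n,\kappa}<+\infty$ thanks to $\kappa<1$. Sobolev embedding $H^{s-1}\hookrightarrow L^\infty$ (valid for $s-1>n/2$) applied to $\mathrm{v}=\nabla u$, combined with the iterative control (\ref{Control-Solution}) obtained in the proof of Proposition \ref{Prop-Estim-Below-1-n} under hypothesis $(H)$, then yields $\|\mathrm{v}(t,\cdot)\|_{L^\infty}^2\leq C(T)\|u_0\|_{H^s}^2$, and all remaining factors can be absorbed into a single constant ${\bf C}_2(T)$, producing (\ref{Main-Estim-2-n}).

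The main technical obstacle is the justification of the integration by parts across the singular set $\bigcup_i\{x_i=0\}$: the divergence $\mathrm{div}(\mathfrak{b}\mathrm{w})$ contains the non-integrable factor $\sum_i|x_i|^{-\kappa-1}$, which is compensated only through the vanishing $\mathfrak{b}_i(0)=0$ imposed in (\ref{Condition-bi}); this is exactly what makes the scheme close. A secondary, more bookkeeping-level difficulty is pinning down the sharp numerical constant $\kappa/2^{n+1}$, which requires the specific weight $\rho$ in Cauchy--Schwarz and the explicit elementary computation above.
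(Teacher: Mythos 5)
Your overall scheme is sound and runs closely parallel to the paper's own proof: the paper also integrates by parts coordinate-by-coordinate (splitting each $x_i$-integral at $x_i=0$, with boundary terms killed by $w_i(\pm 1)=0$ and $\mathfrak{b}_i(0)=0$), and the error term it isolates, $-\int_{[-1,1]^n}|\mathrm{v}|^2\,\mathfrak{b}\,\big(\sum_i \mathfrak{b}_i' w_i\prod_{j\neq i}\mathfrak{b}_j\big)\,dx$, is exactly your $-\int|\mathrm{v}|^2\,\mathfrak{b}\,\nabla\mathfrak{b}\cdot\mathrm{w}\,dx$; your treatment of that term ($L^\infty$--$L^1$ H\"older, Sobolev embedding for $\mathrm{v}=\nabla u$, and the iterative control (\ref{Control-Solution}) under hypothesis $(H)$) coincides with the paper's. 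Your integration-by-parts identity is also correct: the factor $-2$ from differentiating $\mathfrak{b}^2 w_i$ combines with the $+1$ from the product rule to leave a single error term, and the non-integrable factor $|x_i|^{-\kappa-1}$ is indeed compensated by $\mathfrak{b}_i(0)=0$, so the cutoff argument closes.

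The genuine flaw is quantitative: your bookkeeping does not deliver the constant $\kappa/2^{n+1}$ asserted in (\ref{Main-Estim-2-n}). With $\rho=\sum_j |x_j|^{-\kappa-1}$ and your estimate $w_i^2/\rho\leq w_i^2\,|x_i|^{\kappa+1}$, one gets $\int_{[-1,1]^n}|\mathrm{w}|^2/\rho\,dx\leq n\,2^n\,\kappa^2/(4-\kappa^2)$, so your weighted Cauchy--Schwarz yields $\kappa\int|\mathrm{v}|^2\mathfrak{b}^2\rho\,dx\geq \frac{4-\kappa^2}{n\,\kappa\,2^n}\,I(t)^2$. This dominates $\frac{\kappa}{2^{n+1}}I(t)^2$ if and only if $\kappa^2(n+2)\leq 8$, which fails for instance when $n\geq 7$ and $\kappa$ is close to $1$; so, as written, your argument proves the inequality with a constant that can be strictly worse than the one in the statement. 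The repair is one line inside your own framework: instead of $1/\rho\leq|x_i|^{\kappa+1}$ together with the exact integral $\kappa^2/(4-\kappa^2)$, use the pointwise bound $w_i^2(x_i)\leq 2\,|x_i|^{-\kappa-1}$, valid since $(|x_i|^{-\kappa}-1)^2\leq |x_i|^{-2\kappa}+1\leq 2\,|x_i|^{-\kappa-1}$ for $|x_i|\leq 1$ and $0<\kappa<1$; then $|\mathrm{w}|^2\leq 2\rho$ gives $\int_{[-1,1]^n}|\mathrm{w}|^2/\rho\,dx\leq 2^{n+1}$ and exactly the constant $\kappa/2^{n+1}$. This is in effect what the paper does, in the reverse order: it uses the same pointwise estimate to pass from the singular term to $\frac{\kappa}{2}\int|\mathrm{v}|^2|\mathfrak{b}\mathrm{w}|^2\,dx$, and then applies the pointwise Cauchy--Schwarz inequality and Jensen's inequality with respect to the normalized measure $dx/2^n$. (One could also observe that any positive constant in front of $I(t)^2$ suffices for the blow-up argument downstream, but the proposition as stated fixes $\kappa/2^{n+1}$, so your proof of it is incomplete without the correction above.)
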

\noindent
\begin{proof}
To begin, we consider  expression (\ref{Vector-W}), to write
\begin{equation}\label{Identity-Tech-0}
    \begin{split}
 &\,  \int_{[-1,1]^n} \nabla \big(|\mathrm{v}(t,x)|^2\, \mathfrak{b}(x)\big)\cdot \big(\mathfrak{b}(x) \mathrm{w}(x)\big)\, dx \\
 =&\, \sum_{i=1}^{n} \int_{[-1,1]^n} \partial_i \big(|\mathrm{v}(t,x)|^2\, \mathfrak{b}(x)\big) \big(\mathfrak{b}(x) w_i(x_i)\big)\, dx\\
 =&\, \sum_{i=1}^{n}  \int_{-1}^{1}\cdots \int_{-1}^{1} \partial_i \big(|\mathrm{v}(t,x)|^2\, \mathfrak{b}(x)\big) \big(\mathfrak{b}(x) w_i(x_i)\big)\,  dx_{1}\cdots dx_{i}\cdots dx_{n} \\
 =&\, \sum_{i=1}^{n} \int_{-1}^{1}\cdots \left( \int_{-1}^{1}  \partial_i \big(|\mathrm{v}(t,x)|^2\, \mathfrak{b}(x)\big) \big(\mathfrak{b}(x) w_i(x_i) \big) dx_{i}\right) dx_{1},\cdots dx_{i-1}\, dx_{i+1} \cdots dx_{n}.
 \end{split}
\end{equation}
Then, for a fixed $1\leq i \leq n$, we will study the term $\ds{\int_{-1}^{1}  \partial_i (|\mathrm{v}(t,x)|^2\, \mathfrak{b}(x)) \big(\mathfrak{b}(x) w_i(x_i)\big) dx_{i}}$. First, from the explicit definition of the coefficient term $\mathfrak{b}(x)$ given in (\ref{Condition-b}), we write 
\begin{equation}\label{Identity-Tech-1}
\begin{split}
 &\, \int_{-1}^{1}  \partial_i (|\mathrm{v}(t,x)|^2\, \mathfrak{b}(x)) \big(\mathfrak{b}(x) w_i(x_i)\big) dx_{i} \\
=&\, \int_{-1}^{1}  \partial_i \left(|\mathrm{v}(t,x)|^2\, \mathfrak{b}(x)\right)\left( \prod_{j=1}^{n}\mathfrak{b}_j(x_j) w_i(x_i)\right) dx_{i}\\
=&\, \left(\int_{-1}^{1} \partial_i (|\mathrm{v}(t,x)|^2\, \mathfrak{b}(x)) \big(\mathfrak{b}_i(x_i) w_i(x_i)\big) dx_{i} \right) \prod_{j=1,\, j\neq i}^{n} \mathfrak{b}_j(x_j).
\end{split}
\end{equation}
Thereafter, to compute the integral above, we split it into the intervals $[-1,0]$ and $[0,1]$. Note that, from expression (\ref{Component-w}), the the weight $w_i(x_i)$ verifies $w_i(-1)=w_i(1)=0$. Additionally,  from the assumption (\ref{Condition-bi}), the coefficient $\mathfrak{b}_i$ verifies $\mathfrak{b}_i(0)=0$. Therefore,  performing integration by parts and rearranging terms,  we obtain
\begin{equation}\label{Identity-Tech-2}
\begin{split}
&\,  \int_{-1}^{1} \partial_i \big(|\mathrm{v}(t,x)|^2\, \mathfrak{b}(x)\big) \big(\mathfrak{b}_i(x_i)w_i(x_i)\big) dx_{i}
\\
= &\, \int_{-1}^{0}\partial_i \big(|\mathrm{v}(t,x)|^2\, \mathfrak{b}(x)\big)\big(\mathfrak{b}_i(x_i) w_i(x_i)\big) dx_{i} +\int_{0}^{1}\partial_i \big(|\mathrm{v}(t,x)|^2\, \mathfrak{b}(x)\big) \big(\mathfrak{b}_i(x_i)w_i(x_i)\big) dx_{i} 
\\
%=&\,\textcolor{blue}{ -\int_{-1}^{0}\big(|\mathrm{v}(t,x)|^2\, \mathfrak{b}(x)\big)\,  \big( \mathfrak{b}_i(x_i) w_i(x_i)\big)'\, dx_{i} -\int_{0}^{1}\big(|\mathrm{v}(t,x)|^2\, \mathfrak{b}(x)\big) \, \big( \mathfrak{b}_i(x_i) w_i(x_i)\big)'\,  dx_{i} }
%\\
%=&\,\textcolor{blue}{  -\int_{-1}^{0}\big(|\mathrm{v}(t,x)|^2\, \mathfrak{b}(x)\big)\, \mathfrak{b}'_i(x_i) \,  w_i(x_i) dx_{i}-\int_{-1}^{0}\big(|\mathrm{v}(t,x)|^2\, \mathfrak{b}(x)\big)\,  \mathfrak{b}_i(x_i) \,  w'_i(x_i) dx_{i} }
%\\
%&\, \textcolor{blue}{ -\int_{0}^{1}\big(|\mathrm{v}(t,x)|^2\, \mathfrak{b}(x)\big)\, \mathfrak{b}'_i(x_i) \,  w_i(x_i) dx_{i}-\int_{0}^{1}\big(|\mathrm{v}(t,x)|^2\, \mathfrak{b}(x)\big)\,  \mathfrak{b}_i(x_i) \,  w'_i(x_i) dx_{i} }
%\\
=&\,  -\int_{-1}^{0}\big(|\mathrm{v}(t,x)|^2\, \mathfrak{b}(x)\big)\,  \mathfrak{b}_i(x_i) \,  w'_i(x_i) dx_{i}-\int_{0}^{1}\big(|\mathrm{v}(t,x)|^2\, \mathfrak{b}(x)\big)\,  \mathfrak{b}_i(x_i) \,  w'_i(x_i) dx_{i}
\\
&\, - \int_{-1}^{1} \big(|\mathrm{v}(t,x)|^2\, \mathfrak{b}(x)\big)\,  \mathfrak{b}'_i(x_i) \,  w_i(x_i) dx_{i}
\\
= &\, 
I_1- \int_{-1}^{1} \big(|\mathrm{v}(t,x)|^2\, \mathfrak{b}(x)\big)\,  \mathfrak{b}'_i(x_i) \,  w_i(x_i) dx_{i}
,
 \end{split}
\end{equation}
where 
\begin{equation*}
    I_1:=  -\int_{-1}^{0}\big(|\mathrm{v}(t,x)|^2\, \mathfrak{b}(x)\big)\,  \mathfrak{b}_i(x_i) \,  w'_i(x_i) dx_{i}-\int_{0}^{1}\big(|\mathrm{v}(t,x)|^2\, \mathfrak{b}(x)\big)\,  \mathfrak{b}_i(x_i) \,  w'_i(x_i) dx_{i}.
\end{equation*}

In the following technical lemma, we derive a lower bound for $I_1$. 
\begin{Lemme} The following estimate holds:
\begin{equation}\label{Estim-Tech-1}
 I_1  \geq \, \frac{\kappa}{2} \int_{-1}^{1} \big(|\mathrm{v}(t,x)|^2\, \mathfrak{b}(x)\big) \mathfrak{b}_i(x_i) w^2_i(x_i)\, dx_i.
\end{equation}
\end{Lemme}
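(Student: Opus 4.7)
The plan is to compute $w_i'$ explicitly on $(-1,1)\setminus\{0\}$, substitute it into the definition of $I_1$, and then compare the resulting pointwise integrand with $w_i^2$ using only elementary algebra. Differentiating the two branches of the piecewise expression (\ref{Component-w}) gives the unified formula
\begin{equation*}
w_i'(x_i) = -\kappa \, |x_i|^{-\kappa-1}, \qquad x_i \in (-1,1)\setminus\{0\},
\end{equation*}
so in particular $w_i'(x_i)\leq 0$ on both $(-1,0)$ and $(0,1)$. Inserting this identity into the definition of $I_1$ produces
\begin{equation*}
I_1 \;=\; \kappa \int_{-1}^{1} \bigl(|\mathrm{v}(t,x)|^2\, \mathfrak{b}(x)\bigr)\,\mathfrak{b}_i(x_i)\, |x_i|^{-\kappa-1}\, dx_i,
\end{equation*}
where the integrability near $x_i=0$ follows from the smoothness of $\mathfrak{b}_i$ together with the vanishing condition $\mathfrak{b}_i(0)=0$ in (\ref{Condition-bi}), which forces $\mathfrak{b}_i(x_i)=O(|x_i|)$ near the origin and compensates the singularity $|x_i|^{-\kappa-1}$ since $\kappa<1$.

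Next, since $|\mathrm{v}(t,x)|^2\geq 0$, $\mathfrak{b}(x)=\prod_{j}\mathfrak{b}_j(x_j)\geq 0$, and $\mathfrak{b}_i(x_i)\geq 0$ by hypothesis (\ref{Condition-bi}), the integrand is pointwise nonnegative. Therefore the claimed inequality (\ref{Estim-Tech-1}) reduces to the pointwise bound
\begin{equation*}
|x_i|^{-\kappa-1} \;\geq\; \tfrac{1}{2}\, w_i^2(x_i), \qquad x_i \in (-1,1)\setminus\{0\}.
\end{equation*}
Setting $t:=|x_i|\in (0,1]$, this bound becomes $t^{\kappa-1}\geq \tfrac{1}{2}(1-t^\kappa)^2$ after multiplying through by $t^{2\kappa}$; and since $0\leq 1-t^\kappa\leq 1$ gives $(1-t^\kappa)^2\leq 1$, while $\kappa-1<0$ and $t\leq 1$ give $t^{\kappa-1}\geq 1$, the inequality holds with plenty of room to spare.

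Combining the two displays yields
\begin{equation*}
I_1 \;\geq\; \frac{\kappa}{2} \int_{-1}^{1} \bigl(|\mathrm{v}(t,x)|^2\, \mathfrak{b}(x)\bigr)\,\mathfrak{b}_i(x_i)\, w_i^2(x_i)\, dx_i,
\end{equation*}
which is (\ref{Estim-Tech-1}). The only subtlety in this argument is the handling of the singularity of $w_i'$ at the origin, which is resolved by the vanishing condition $\mathfrak{b}_i(0)=0$; the rest is a direct calculation.
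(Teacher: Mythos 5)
Your proof is correct and follows essentially the same route as the paper: compute $w_i'$ explicitly, insert it into $I_1$, and reduce the claim to the pointwise bound $|x_i|^{-\kappa-1}\geq \tfrac{1}{2}\,w_i^2(x_i)$ via the nonnegativity of $|\mathrm{v}|^2\,\mathfrak{b}\,\mathfrak{b}_i$. The only (harmless) differences are cosmetic: you treat the two branches in the unified form $|x_i|$ and verify the pointwise inequality by the substitution $t=|x_i|$ and multiplying by $t^{2\kappa}$, where the paper instead expands $w_i^2\leq (-x_i)^{-2\kappa}+1$ and bounds each term; your explicit remark on integrability near $x_i=0$ is a welcome addition the paper leaves implicit.
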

\begin{proof} From  (\ref{Component-w}) it follows that
\begin{equation*}
 w'_i(x_i)=   \begin{cases}\vspace{2mm}
 -\kappa(-x_i)^{-\kappa-1}, \qquad -1<x_i<0, \\
 -\kappa\, x^{-\kappa-1}_{i}, \qquad 0<x_i<1,
    \end{cases}
\end{equation*}
hence, we can write 
\begin{equation}\label{Identity-Tech-1-bis}
 I_1 = \,  \kappa\,\int_{-1}^{0}\big(|\mathrm{v}(t,x)|^2\, \mathfrak{b}(x)\big) \mathfrak{b}_i(x_i)(-x_i)^{-\kappa-1} dx_{i} +\kappa\,\int_{0}^{1}\big(|\mathrm{v}(t,x)|^2\, \mathfrak{b}(x)\big)\mathfrak{b}_i(x_i)\,x^{-\kappa-1}_{i} dx_{i}.  
\end{equation}

On the other hand, always by (\ref{Component-w}), the following pointwise estimates holds
\begin{equation*}
\begin{cases}\vspace{3mm}
 (-x_i)^{-\kappa-1} \geq \frac{1}{2}\,w^2_i(x_i), \qquad -1<x_i<0, \\
 x_i^{-\kappa-1} \geq \frac{1}{2}\,w^2_i(x_i), \qquad 0<x_i<1.
\end{cases}
\end{equation*}
In fact, concerning the first estimate above, for any $-1<x_i<0$, we write $w^2_i(x_i)=((-x_i)^{-\kappa}-1)^2\leq (-x_i)^{-2\kappa}+1$. Since $0<\kappa<1$  and $0<-x_i<1$, it follows that $(-x_i)^{-2\kappa}\leq (-x_i)^{-\kappa-1}$ and $1\leq (-x_i)^{-\kappa-1}$. Thus, we obtain $w^2_i(x_i)\leq 2 (-x_i)^{-\kappa-1}$. The second estimate follows from analogous computations. 

\medskip

Now, taking into account these pointwise estimates and the assumption (\ref{Condition-bi}) that ensures $\mathfrak{b}_i(x_i)\geq 0$, we revisit the identity (\ref{Identity-Tech-1-bis}) to write 
\begin{equation*}
    I_1\geq \frac{\kappa}{2} \int_{-1}^{1} \big(|\mathrm{v}(t,x)|^2\, \mathfrak{b}(x)\big) \mathfrak{b}_i(x_i) w^2_i(x_i)\, dx_i,
\end{equation*}
this last inequality yields the desired estimate (\ref{Estim-Tech-1}). This finishes the proof.
\end{proof}

Thus, considering the estimate (\ref{Estim-Tech-1}) and (\ref{Identity-Tech-2}), we obtain
\begin{equation*}
    \begin{split}
      \int_{-1}^{1} \partial_i \big(|\mathrm{v}(t,x)|^2\, \mathfrak{b}(x)\big) \big(\mathfrak{b}_i(x_i)w_i(x_i)\big) dx_{i} \geq &\, \frac{\kappa}{2} \int_{-1}^{1} \big(|\mathrm{v}(t,x)|^2\, \mathfrak{b}(x)\big) \mathfrak{b}_i(x_i) w^2_i(x_i)\, dx_i\\
      &\, - \int_{-1}^{1} \big(|\mathrm{v}(t,x)|^2\, \mathfrak{b}(x)\big)\,  \mathfrak{b}'_i(x_i) \,  w_i(x_i) dx_{i}.
    \end{split}
\end{equation*}
Then, we come  back to the identity (\ref{Identity-Tech-1}). Recalling  that from the assumption (\ref{Condition-bi}) it follows that $\ds{\prod_{j=1,\, j\neq i}^{n} \mathfrak{b}_j(x_j)\geq 0}$, and from the assumption (\ref{Condition-b}) we have $\ds{\mathfrak{b}(x)=\prod_{j=1}^{n}\mathfrak{b}_j(x_j)}$, we can write

\begin{equation*}
\begin{split}
 &\int_{-1}^{1}  \partial_i (|\mathrm{v}(t,x)|^2\, \mathfrak{b}(x)) \big(\mathfrak{b}(x) w_i(x_i)\big) dx_{i} \\
 \geq &\, \left( \frac{\kappa}{2} \int_{-1}^{1} \big(|\mathrm{v}(t,x)|^2\, \mathfrak{b}(x)\big) \mathfrak{b}_i(x_i) w^2_i(x_i)\, dx_i \right) \prod_{j=1,\, j\neq i}^{n} \mathfrak{b}_j(x_j) 
 \\ &
 \,- \left( \int_{-1}^{1} \big(|\mathrm{v}(t,x)|^2\, \mathfrak{b}(x)\big)\,  \mathfrak{b}'_i(x_i) \,  w_i(x_i) dx_{i} \right)\prod_{j=1,\, j\neq i}^{n} \mathfrak{b}_j(x_j)\\
 =&\,  \frac{\kappa}{2} \int_{-1}^{1} |\mathrm{v}(t,x)|^2\,  \mathfrak{b}^2(x) w^2_i(x_i)\, dx_i
 % \\ &\, 
 - \int_{-1}^{1} \big(|\mathrm{v}(t,x)|^2\, \mathfrak{b}(x)\big)\,  \mathfrak{b}'_i(x_i)\, \prod_{j=1,\, j\neq i}^{n} \mathfrak{b}_j(x_j) \,  w_i(x_i) dx_{i}.
 \end{split}
\end{equation*}

With this lower bound at hand,  returning to the identity (\ref{Identity-Tech-0}), we obtain 
\begin{equation}\label{Estim-Tech-3}
\begin{split}
  &\,  \int_{[-1,1]^n} \nabla \big( |\mathrm{v}(t,x)|^2 \mathfrak{b}(x) \big)\cdot \big( \mathfrak{b}(x)\mathrm{w}(x) \big)\, dx \\
 \geq &\, \frac{\kappa}{2} \sum_{i=1}^{n} \int_{-1}^{1}\cdots \left( \int_{-1}^{1} |\mathrm{v}(t,x)|^2 \mathfrak{b}^2(x) w^2_i(x_i)\, dx_i\right)dx_{1}\cdots dx_{i-1}d_{i+1}\cdots dx_n \\
 &\, - \sum_{i=1}^{n} \int_{-1}^{1}\cdots \left(\int_{-1}^{1}|\mathrm{v}(t,x)|^2 \mathfrak{b}(x) \, \mathfrak{b}'_i(x_i)\prod_{j=1,\, j\neq i}^{n} \mathfrak{b}_j(x_j) w_i(x_i)\, dx_i \right)dx_{1}\cdots dx_{i-1}d_{i+1}\cdots dx_n\\
 =&\, \frac{\kappa}{2}\int_{[-1,1]^n} |\mathrm{v}(t,x)|^2 |\mathfrak{b}(x)\mathrm{w}(x)|^2\, dx \\
 &\, - \int_{[-1,1]^n} |\mathrm{v}(t,x)|^2 \mathfrak{b}(x) \left( \sum_{i=1}^n \mathfrak{b}'_i(x_i) w_i(x_i)\prod_{j=1,\, j\neq i}^{n} \mathfrak{b}_j(x_j)\right)dx.
\end{split}
\end{equation}

Here, we define
\[ I_2:= \frac{\kappa}{2}\int_{[-1,1]^n} |\mathrm{v}(t,x)|^2 |\mathfrak{b}(x)\mathrm{w}(x)|^2\, dx,\]
\[I_3:=- \int_{[-1,1]^n} |\mathrm{v}(t,x)|^2 \mathfrak{b}(x) \left( \sum_{i=1}^n \mathfrak{b}'_i(x_i) w_i(x_i)\prod_{j=1,\, j\neq i}^{n} \mathfrak{b}_j(x_j)\right)dx.\]
In the following technical lemmas, we  derive a lower bound for these expressions.
\begin{Lemme} The following lower bound holds:
\begin{equation}\label{Estim-Below-1}
I_2 \geq \frac{\kappa}{2^{n+1}}\left( \int_{[-1,1]^n} \mathrm{v}(t,x)\cdot \big(\mathfrak{b}(x)\mathrm{w}(x)\big)\, dx \right)^2.    
\end{equation}
\end{Lemme}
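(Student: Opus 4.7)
The plan is to establish the inequality by two successive applications of the Cauchy--Schwarz inequality: one pointwise on $\mathbb{R}^n$ to handle the inner product $\mathrm{v}\cdot(\mathfrak{b}\mathrm{w})$, and one integral version involving the constant function $1$ on the cube $[-1,1]^n$.

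\medskip

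First I would apply the vector Cauchy--Schwarz inequality pointwise in $x$, which gives
\[
\bigl|\mathrm{v}(t,x)\cdot \big(\mathfrak{b}(x)\mathrm{w}(x)\big)\bigr| \leq |\mathrm{v}(t,x)|\,|\mathfrak{b}(x)\mathrm{w}(x)|.
\]
Squaring and integrating over the cube $[-1,1]^n$ (which has Lebesgue measure $2^n$), I would then apply Cauchy--Schwarz in the integral with the test function $\mathbf{1}_{[-1,1]^n}$:
\[
\left(\int_{[-1,1]^n} \mathrm{v}(t,x)\cdot\big(\mathfrak{b}(x)\mathrm{w}(x)\big)\,dx\right)^2 \leq 2^n \int_{[-1,1]^n} |\mathrm{v}(t,x)|^2\,|\mathfrak{b}(x)\mathrm{w}(x)|^2\,dx.
\]

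\medskip

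Rearranging and multiplying by $\kappa/2$ yields directly
\[
I_2 \;=\; \frac{\kappa}{2}\int_{[-1,1]^n} |\mathrm{v}(t,x)|^2\,|\mathfrak{b}(x)\mathrm{w}(x)|^2\,dx \;\geq\; \frac{\kappa}{2^{n+1}}\left(\int_{[-1,1]^n} \mathrm{v}(t,x)\cdot\big(\mathfrak{b}(x)\mathrm{w}(x)\big)\,dx\right)^2,
\]
which is precisely \eqref{Estim-Below-1}. There is no real obstacle: the only thing to watch is that $\mathrm{w}$ is supported in $[-1,1]^n$, so restricting integration to this cube (and using its volume $2^n$) is legitimate; no regularity issue arises since $\mathrm{v}\in \mathcal{C}_t\,\mathcal{C}^2_x$ and $\mathfrak{b}\mathrm{w}\in L^2(\R^n)$ from the choice $0<\kappa<1$ in \eqref{Component-w}.
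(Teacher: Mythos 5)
Your proof is correct and is essentially the paper's own argument: the paper likewise combines the pointwise Cauchy--Schwarz bound $|\mathrm{v}\cdot(\mathfrak{b}\mathrm{w})|^2\le |\mathrm{v}|^2\,|\mathfrak{b}\mathrm{w}|^2$ with the estimate $\bigl(\int_{[-1,1]^n}f\,dx\bigr)^2\le 2^n\int_{[-1,1]^n}f^2\,dx$, only phrasing the latter as Jensen's inequality for $x\mapsto x^2$ under the normalized measure $dx/2^n$ instead of Cauchy--Schwarz against $\mathbf{1}_{[-1,1]^n}$ --- the same inequality. No gaps.
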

\begin{proof} Since the Lebesgue measure of the set $[-1,1]^n$ is directly computed as $dx\left( [-1,1]^n \right)=2^n$, by the Jensen inequality it follows that 
\begin{equation*}
    \begin{split}
 I_2 \geq    &\, \frac{\kappa}{2}   \int_{[-1,1]^n} | \mathrm{v}(t,x) \cdot \big( \mathfrak{b}(x)\mathrm{w}(x) \big)|^2\, dx \\ 
\geq &\,\frac{\kappa\, 2^n}{2} \int_{[-1,1]^n} | \mathrm{v}(t,x) \cdot \big( \mathfrak{b}(x)\mathrm{w}(x) \big)|^2\, \frac{dx}{2^n} \\
\geq &\,\frac{\kappa\, 2^n}{2} \left( \int_{[-1,1]^n} \mathrm{v}(t,x)\cdot \big( \mathfrak{b}x \mathrm{w}(x)\big)\, \frac{dx}{2^n} \right)^{2}\\
=&\, \frac{\kappa}{2^{n+1}}\, \left( \int_{[-1,1]^n} \mathrm{v}(t,x)\cdot  \big( \mathfrak{b}(x)\mathrm{w}(x)\big)\,dx \right)^{2}.
    \end{split}
\end{equation*}
\end{proof}
\begin{Lemme} There exists a
constant  ${\bf C}_2(T)={\bf C}_2(n,\kappa, \| \mathfrak{b}\|_{L^\infty}, \| \nabla \mathfrak{b}\|_{L^\infty}, T)>0$ such that following lower bound holds 
\begin{equation}\label{Estim-Below-2}
 I_3 \geq -{\bf C}_2(T)\| u_0\|^2_{H^s}. 
\end{equation}
\end{Lemme}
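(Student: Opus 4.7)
The plan is to bound $|I_3|$ from above and then invoke $I_3\geq -|I_3|$. The first step is to exploit the factorized structure (\ref{Condition-b}): differentiating gives $\partial_i\mathfrak{b}(x)=\mathfrak{b}_i'(x_i)\prod_{j\neq i}\mathfrak{b}_j(x_j)$, so the inner sum appearing in $I_3$ is precisely $\nabla \mathfrak{b}(x)\cdot \mathrm{w}(x)$. Consequently
\[
I_3 = -\int_{[-1,1]^n}|\mathrm{v}(t,x)|^2\,\mathfrak{b}(x)\big(\nabla \mathfrak{b}(x)\cdot\mathrm{w}(x)\big)\,dx,
\]
and each coefficient $\mathfrak{b}_i'(x_i)\prod_{j\neq i}\mathfrak{b}_j(x_j)$ is pointwise bounded by $\|\nabla \mathfrak{b}\|_{L^\infty}$.

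Next I would pull the $L^\infty$ norms out of the integral, keeping the weights $w_i$ in $L^1$. Since $0<\kappa<1$, each component $w_i$ belongs to $L^1([-1,1])$ with $\|w_i\|_{L^1([-1,1])}\leq C_\kappa$, so integrating the remaining $n-1$ variables over $[-1,1]$ contributes a factor $2^{n-1}$ for each $i$, and
\[
|I_3| \leq n\,2^{n-1} C_\kappa\,\|\mathfrak{b}\|_{L^\infty}\,\|\nabla \mathfrak{b}\|_{L^\infty}\,\|\mathrm{v}(t,\cdot)\|_{L^\infty}^2.
\]

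To control $\|\mathrm{v}(t,\cdot)\|_{L^\infty}^{2}$, I would invoke the Sobolev embedding $H^{\sigma}(\R^n)\subset L^\infty(\R^n)$ for any $n/2<\sigma<s-1$, which is admissible since $s>n/2+3$. This gives $\|\mathrm{v}(t,\cdot)\|_{L^\infty}=\|\nabla u(t,\cdot)\|_{L^\infty}\leq C\,\|u(t,\cdot)\|_{H^{s}}$. Then the iterative extension already established in (\ref{Control-Solution}) yields $\|u(t,\cdot)\|_{H^s}\leq C(T)\|u_0\|_{H^s}$ uniformly for $0\leq t\leq T$. Combining everything produces
\[
|I_3|\leq \mathbf{C}_2(T)\,\|u_0\|_{H^s}^{2},\qquad \mathbf{C}_2(T):=n\,2^{n-1}C_\kappa\,C^{2}\,C(T)^{2}\,\|\mathfrak{b}\|_{L^\infty}\,\|\nabla \mathfrak{b}\|_{L^\infty},
\]
and the desired lower bound (\ref{Estim-Below-2}) follows from $I_3\geq -|I_3|$.

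This argument is essentially a Hölder estimate of the form $L^\infty\!\cdot\! L^\infty\!\cdot\! L^\infty\!\cdot\! L^1$ combined with Sobolev embedding, so no structural difficulty is anticipated; the only mildly delicate point is the bookkeeping of the dimensional factor $n\,2^{n-1}$ arising from integrating out the $n-1$ transverse variables on $[-1,1]$, and the verification that the power of $\|u_0\|_{H^s}$ is exactly quadratic, which matches the quadratic main term in (\ref{Estim-Below-1}).
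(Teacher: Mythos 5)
Your proposal is correct and follows essentially the same route as the paper: identify the sum in $I_3$ as $\nabla\mathfrak{b}(x)\cdot\mathrm{w}(x)$, apply a H\"older-type $L^\infty\cdot L^\infty\cdot L^1$ estimate using $\|\mathrm{w}\|_{L^1}\leq C_{n,\kappa}$, then control $\|\mathrm{v}(t,\cdot)\|_{L^\infty}^2$ via the Sobolev embedding $H^\sigma(\R^n)\subset L^\infty(\R^n)$ with $n/2<\sigma<s-1$ and the global control (\ref{Control-Solution}). The only difference is cosmetic bookkeeping of the constant (your $n\,2^{n-1}C_\kappa\,\|\mathfrak{b}\|_{L^\infty}$ versus the paper's $n\,C_{n,\kappa}\bigl(\sum_{i=1}^n\|\mathfrak{b}_i\|_{L^\infty}\bigr)^{n}$), which does not affect the stated dependence of ${\bf C}_2(T)$.
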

\begin{proof} Recall that, from  (\ref{Vector-W}),  a (\ref{Component-w}) and  $0<\kappa<1$, it follows that  $\| \mathrm{w}\|_{L^1}\leq C_{n,\kappa}<+\infty$. On the other hand, note that from  the assumption (\ref{Condition-b}) the following inequality holds  $\ds{\| \mathfrak{b}\|_{L^\infty}\leq \sum_{i=1}^n \| \mathfrak{b}_i \|_{L^\infty}}$. Then, using H\"older inequalities ,  we write
\begin{equation*}
\begin{split}
-I_3 = &\,\int_{[-1,1]^n} |\mathrm{v}(t,x)|^2 \mathfrak{b}(x) \left( \sum_{i=1}^n \mathfrak{b}'_i(x_i) w_i(x_i)\prod_{j=1,\, j\neq i}^{n} \mathfrak{b}_j(x_j)\right)dx
\\
\leq &\, \| \mathrm{v}(t,\cdot)\|^2_{L^\infty}\, \| \mathfrak{b}\|_{L^\infty} \left( \int_{[-1,1]^n} | \nabla\mathfrak{b}(x)\cdot \mathrm{w}(x)|\, dx \right) \, \left( \sum_{i=1}^n \prod_{j=1\, j\neq i}^{n}\| \mathfrak{b}_j \|_{L^\infty} \right)\\
\leq &\, \| \mathrm{v}(t,\cdot)\|^2_{L^\infty} \left( \sum_{i=1}^n \| \mathfrak{b}_i \|_{L^\infty}\right) \|\nabla \mathfrak{b}\|_{L^\infty} \, \| \mathrm{w}\|_{L^1}\, n\, \left( \sum_{i=1}^n \| \mathfrak{b}_i \|_{L^\infty}\right)^{n-1}\\
\leq &\, n\, C_{n,\kappa} \left( \sum_{i=1}^n \| \mathfrak{b}_i \|_{L^\infty}\right)^{n}\, \| \nabla\mathfrak{b}\|_{L^\infty}\, \| \mathrm{v}(t,\cdot)\|^2_{L^\infty}.
\end{split}
\end{equation*} 
Additionally, from Sobolev embeddings (with the parameter $n/2<\sigma<s-1$), along with the identity $\mathrm{v}(t,x):=\nabla u(t,x)$ and the control (\ref{Control-Solution}) on $u(t,x)$, it follows that 
\[ \| \mathrm{v}(t,\cdot)\|^2_{L^\infty} \leq C\, \|\mathrm{v}(t,\cdot)\|^2_{H^\sigma}\leq C\, \|u(t,\cdot)\|^2_{H^s}\leq C\, C^2(T)\|u_0\|^2_{H^s}.  \]
Consequently, we obtain
\[-I_3 \leq n\, C_{n,\kappa} \left( \sum_{i=1}^n \| \mathfrak{b}_i \|_{L^\infty}\right)^{n}\, \| \nabla\mathfrak{b}\|_{L^\infty}\,C\, C^2(T)\|u_0\|^2_{H^s}:={\bf C}_{2}(T)\| u_0\|^2_{H^s}.\]
\end{proof}

Finally, the wished lower bound (\ref{Main-Estim-2-n}) directly follows now from  the estimates (\ref{Estim-Below-1}) and (\ref{Estim-Below-2}), along with the inequality (\ref{Estim-Tech-3}). This concludes the proof of Proposition \ref{prop 2 section main thm}. 
\end{proof}

%------------------------
\subsection*{End of the proof}
%-------------------------

With the information obtained in estimates 
(\ref{Main-Estim-1-n}) and (\ref{Main-Estim-2-n}), we return to the identity (\ref{Main-Identity-n}) where, for any time $0<T<+\infty$ and for any $0\leq t \leq T$, we  obtain
\begin{equation}\label{ineq. 1 end of proof}
    \begin{split}
       &\,  \frac{d}{dt} \int_{[-1,1]^n} \mathrm{v}(t,x)\cdot \big( \mathfrak{b}(x)\cdot \mathrm{w}(x)\big)\,dx \\
       \geq &\,  - {\bf C}_1(T)\|u_0\|_{H^s}+ \frac{\kappa}{2^{n+1}} \, \left( \int_{[-1,1]^n} \mathrm{v}(t,x)\cdot \big( \mathfrak{b}(x) \mathrm{w}(x) \big)\,dx \right)^{2} -{\bf C}_2 (T)\| u_0\|^2_{H^s}.
  \end{split}
\end{equation}
With this inequality,  following similar ideas of \cite{Jarrin-Vergara-Hermosilla}, we conclude the blow-up in finite of the solution $u(t,x)$ as follows. First, we define the functional
\begin{equation*}
    I(t):=\int_{[-1,1]^n} \mathrm{v}(t,x)\cdot \big( \mathfrak{b}(x) \mathrm{w}(x) \big)\,dx,
\end{equation*}
the constant
\begin{equation*}
    c_1:= \frac{\kappa}{2^{n+1}}>0,
\end{equation*}
and the quantity depending on the time $T$:
\begin{equation*}
   \quad  {\bf C}(T):= {\bf C}_1(T)+{\bf C}_2(T).
\end{equation*}
With this notation, we recast \eqref{ineq. 1 end of proof}
as the following ordinary differential inequality:
\begin{equation*}
   I'(t) \geq c_1 I^2(t)-{\bf C}(T)\times \begin{cases}\vspace{2mm} \| u_0 \|_{H^s}, \quad \text{when}\,\,\, \| u_0\|_{H^s}\leq 1, \\
   \| u_0 \|^2_{H^s}, \quad \text{when}\,\,\, \|u_0\|_{H^s}>1,
   \end{cases}
   \quad 0< t \leq T.
\end{equation*}

Given a number  $m_{*}\gg 1$, we fix the time $T=m_{*}$. So, in the quantity  ${\bf C}(T)$ introduced above  we obtain the numerical constant  ${C}(m_{*})$. Additionally, we define the constant 
\begin{equation*}
    c_2:={\bf C}(m_{*}) \times \begin{cases}\vspace{2mm} \| u_0\|_{H^s}, \quad \text{when}\,\, \|u_0\|_{H^s}\leq 1, \\
       \| u_0\|^2_{H^s}, \quad \text{when}\,\, \|u_0\|_{H^s}>1.
       \end{cases}.
\end{equation*}
Then, the functional $I(t)$ verifies:
\[ I'(t)\geq c_1 I^2(t)-c_2,  \qquad   c_1,c_2>0,  \quad 0<t\leq m_{*},\]
with initial datum given by 
\begin{equation*}
 I(0)=\int_{[-1,1]^n} \mathrm{v}(0,x)\cdot \big( \mathfrak{b}(x) \mathrm{w}(x)\big)\,dx=\int_{[-1,1]^n} \nabla u_0(x)\cdot \big(\mathfrak{b}(x)\mathrm{w}(x)\big)\,dx.
\end{equation*}

\medskip

To continue, we denote by $J(t)$ the solution of the initial value problem below
\begin{equation*}
\begin{cases}\vspace{2mm}
   J'(t)= c_1 J^2(t)-c_2, \\
   J(0)=I(0),
\end{cases}
 \end{equation*}
which, is explicitly computed as:
\begin{equation*}
 J(t)= \sqrt{\frac{c_2}{c_1}}\, \frac{\left( I(0)+ \sqrt{\frac{c_2}{c_1}} \right)+\left( I(0) -  \sqrt{\frac{c_2}{c_1}} \right) e^{2 \sqrt{c_1c_2}\, t}}{\left( I(0)+ \sqrt{\frac{c_2}{c_1}} \right)-\left( I(0) -  \sqrt{\frac{c_2}{c_1}} \right) e^{2 \sqrt{c_1c_2}\, t}}.
\end{equation*}

Note that this function blows-up at the finite time
\begin{equation}\label{Time-blow-up}
    t_{*}:=\frac{1}{2\sqrt{c_1c_2}}\, \ln\left(  \frac{I(0)\sqrt{c_1}+\sqrt{c_2}}{I(0)\sqrt{c_1}-\sqrt{c_2}} \right),
\end{equation} 
which is positive  as long as the following inequality holds:
\[ \frac{I(0)\sqrt{c_1}+\sqrt{c_2}}{I(0)\sqrt{c_1}-\sqrt{c_2}} >1. \]
From  the conditions \eqref{Condition-u0-1} and \eqref{Condition-u0-2}  assumed on the initial datum $u_0$, together with the expressions $I(0)$, $c_1$ and $c_2$ defined above, we obtain that $I(0)>0$ and  $\ds{I(0)\sqrt{c_1}-\sqrt{c_2}>0}$, respectively. Thus,  the wished inequality holds. 

\medskip

Now, by the standard comparison principle of ordinary differential equations, it follows that $I(t)\geq J(t)$, and the functional $I(t)$ also blows-up at the time $t_{*}$. 

\medskip

Finally, recalling the bound  $\| \mathrm{w}\|_{L^1([-1,1]^n)}\leq C_{n,\kappa}<+\infty$,  and considering the H\"older  inequality, we can write
\begin{equation*}
\begin{split}
     I(t)&  \leq \| \mathrm{v}(t,\cdot)\|_{L^\infty([-1,1]^n)}\, \| \mathfrak{b}\|_{L^\infty([-1,1]^n)} \, \|  \mathrm{w}\|_{L^1([-1,1]^n)} 
     \\
      & \leq C_{n,\kappa}\, \| \mathfrak{b}\|_{L^\infty}\, \| \mathrm{v}(t,\cdot)\|_{L^\infty(\R^n)} 
      . 
    \end{split}
\end{equation*}
Thus, by the   
Sobolev embeddings (with a parameter $n/2<\sigma<s-1$), we obtain  
\begin{equation*}
\begin{split}
     I(t)&  \leq  C_{n,\kappa} \, \| \mathfrak{b}\|_{L^\infty}\,\|\mathrm{v}(t,\cdot) \|_{H^\sigma}
      \\
      &\leq 
      C_{n,\kappa} \,\| \mathfrak{b}\|_{L^\infty}\,\| u(t,\cdot)\|_{H^{\sigma+1}}\\
      &\leq  C_{n,\kappa} \,\| \mathfrak{b}\|_{L^\infty}\,\| u(t,\cdot)\|_{H^s}. 
    \end{split}
\end{equation*}

Consequently, the quantity $\| u(t,\cdot)\|_{H^s}$ blows-up at the finite time $t_{*}$, which gives a contradiction. This concludes the proof of
Theorem \ref{Main-Th}.
%----------------------------------

\medskip 

\paragraph{\bf Acknowledgements.} 
The authors warmly thank Professor Rafael Granero-Belinchón for suggesting this method to study blow-up phenomenon. We also thank  Professors Alexey Cheskidov and Philippe Laurençot for their helpful comments and advises.

\medskip

\paragraph{{\bf Statements and Declaration}}
Data sharing does not apply to this article as no datasets were generated or analyzed during the current study.  In addition, the authors declare that they have no conflicts of interest, and all of them have equally contributed to this paper.

\end{document}